\newdimen\AAdi%
\newbox\AAbo%
\def\AAk#1#2{\s_etbox\AAbo=\hbox{#2}\AAdi=\wd\AAbo\kern#1\AAdi{}}%
\def\AAr#1#2#3{\s_etbox\AAbo=\hbox{#2}\AAdi=\ht\AAbo\raise#1\AAdi\hbox{#3}}%
\font\tenmsb=msbm10 at 12pt \font\sevenmsb=msbm7 at 8pt
\font\fivemsb=msbm5 at 6pt
\def\Bbb#1{{\tenmsb\fam\msbfam#1}}
\newtheorem{thm}{Theorem}[section]
\newtheorem{main-thm}{Main Theorem}
\newtheorem{lem}[thm]{Lemma}
\newtheorem{cor}[thm]{Corollary}
\newtheorem{pro}[thm]{Proposition}
\newtheorem{con}[thm]{Conjecture}
\theoremstyle{remark}\newtheorem{rem}{Remark}[section]
\newcommand{\Section}[2]{\setcounter{equation}{0}
	\allowdisplaybreaks
	\section[#1]{#2}}
\def\n{\nabla}
\def\f#1#2{\frac{#1}{#2}}
\def\grs#1#2{\bold G_{#1,#2}}
\def\mc#1{\mathcal{#1}}
\def\pf#1{\frac{\partial}{\partial #1}}
\def\a{\alpha}
\def\be{\beta}
\def\p#1{\partial #1}
\def\de{\delta}
\def\De{\Delta}
\def\G{\Gamma}
\def\g{\gamma}
\def\la{\lambda}
\def\th{\theta}
\def\Th{\Theta}
\def\si{\sigma}
\def\Si{\Sigma}
\def\w{\wedge}
\def\R{\Bbb{R}}
\def\C{\Bbb{C}}
\def\lan{\langle}
\def\ran{\rangle}
\def\ra{\rightarrow}
\def\ol{\overline}
\begin{document}
	\title
	{ The rigidity of minimal Legendrian submanifolds in the Euclidean spheres via eigenvalues of fundamental matrices}
	
	\author[f]{Pei-Yi Wu}
	\ead{17110180010@fudan.edu.cn}
	\author[f,m]{Ling Yang}
	\ead{yanglingfd@fudan.edu.cn}
	
	\address[f]{School of Mathematical Sciences, Fudan University, Shanghai, 200433, China}
	\address[m]{Shanghai Center for Mathematical Sciences, Shanghai, 200438, China}


	\begin{abstract}
    In this paper, we study the rigidity problem for compact minimal Legendrian submanifolds in the unit Euclidean spheres
    via eigenvalues of fundamental matrices, which measure the squared norms of the second fundamental form on
    all normal directions. By using Lu's inequality \cite{Lu} on the upper bound of the squared norm
    of Lie brackets of symmetric matrices, we establish an optimal pinching theorem for such submanifolds of all dimensions,
    giving a new characterization for the Calabi tori. This pinching condition can also be described by the eigenvalues of the Ricci curvature tensor.
    Moreover, when the third large eigenvalue of the fundamental matrix vanishes everywhere, we
    get an optimal rigidity theorem under a weaker pinching condition.


	\end{abstract}

	
	\maketitle
	
	\tableofcontents
	
	\renewcommand{\proofname}{\it Proof.}
	
	
	\Section{Introduction}{Introduction}

    In 1968, J. Simons \cite{S} proved a well-known rigidity theorem as follows:
    \begin{thm}
    Let $M$ be an $n$-dimensional compact minimal submanifold
    in $S^{n+m}$, denote by $|B|^2$ be the square norm of the second fundamental form $B$ of $M$, then
    \begin{equation}
    \int_M |B|^2\left[(2-\f{1}{m})|B|^2-n\right]dM\geq 0.
    \end{equation}
    As a corollary, the pinching condition $0\leq |B|^2\leq \f{n}{2-\f{1}{m}}$ forces $|B|^2\equiv 0$ or $|B|^2\equiv \f{n}{2-\f{1}{m}}$.
    \end{thm}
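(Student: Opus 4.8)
\medskip
\noindent\textbf{Proof proposal.} The plan is to deduce the integral inequality $(1.1)$ from the Bochner-type formula for $\Delta|B|^2$ (Simons' identity) combined with a sharp pointwise algebraic estimate on the quadratic terms built from the shape operators, and then to obtain the corollary by a pointwise sign argument. First I would fix, near a point of $M$, an orthonormal adapted frame $\{e_1,\dots,e_n,e_{n+1},\dots,e_{n+m}\}$, write $h^\a_{ij}=\lan B(e_i,e_j),e_\a\ran$ and let $A_\a=(h^\a_{ij})_{1\le i,j\le n}$ be the shape operator in the normal direction $e_\a$, so that $|B|^2=\sum_\a N(A_\a)$ with $N(P):=\tr(PP^{t})$, while minimality reads $\tr A_\a=\sum_i h^\a_{ii}=0$ for all $\a$. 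Since $S^{n+m}$ has constant curvature $1$, the Codazzi equation makes the covariant derivative $h^\a_{ijk}$ totally symmetric in $i,j,k$; commuting two covariant derivatives via the Ricci identity and substituting the Gauss equation for the intrinsic curvature of $M$ together with the structure equations of the normal bundle then gives Simons' identity
\begin{equation}\label{eq:S-id}
\f12\Delta|B|^2=|\nabla B|^2+n|B|^2-\sum_{\a,\be}\Bigl(N\bigl([A_\a,A_\be]\bigr)+\bigl(\tr(A_\a A_\be)\bigr)^2\Bigr),
\end{equation}
where $[A_\a,A_\be]:=A_\a A_\be-A_\be A_\a$. Tracking the curvature contractions is the main bookkeeping here, but for a minimal submanifold of the unit sphere they collapse to the single linear term $n|B|^2$.

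The algebraic core is the pointwise inequality
\begin{equation}\label{eq:S-alg}
\sum_{\a,\be}\Bigl(N\bigl([A_\a,A_\be]\bigr)+\bigl(\tr(A_\a A_\be)\bigr)^2\Bigr)\leq\Bigl(2-\f1m\Bigr)|B|^4 .
\end{equation}
To prove \eqref{eq:S-alg} I would first record the elementary bound $N([A,B])\le 2N(A)N(B)$ for symmetric matrices: diagonalizing $A$ with eigenvalues $\la_1,\dots,\la_n$ one has $(\la_i-\la_j)^2\le 2\sum_k\la_k^2=2N(A)$ for $i\ne j$, so $N([A,B])=\sum_{i\ne j}(\la_i-\la_j)^2B_{ij}^2\le 2N(A)\sum_{i,j}B_{ij}^2=2N(A)N(B)$. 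Since both sides of \eqref{eq:S-alg} are invariant under rotations of the normal frame, I may assume at the point in question that the symmetric matrix $S_{\a\be}:=\tr(A_\a A_\be)$ is diagonal; then $\si_\a:=S_{\a\a}=N(A_\a)\ge 0$ and $\sum_\a\si_\a=|B|^2$, and $\sum_{\a,\be}N([A_\a,A_\be])=2\sum_{\a<\be}N([A_\a,A_\be])\le 4\sum_{\a<\be}\si_\a\si_\be=2|B|^4-2\sum_\a\si_\a^2$, while $\sum_{\a,\be}(\tr(A_\a A_\be))^2=\sum_\a\si_\a^2$; adding these and using Cauchy--Schwarz in the form $\sum_\a\si_\a^2\ge|B|^4/m$ yields exactly $(2-\f1m)|B|^4$. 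Substituting \eqref{eq:S-alg} into \eqref{eq:S-id} gives the pointwise estimate $\f12\Delta|B|^2\ge|\nabla B|^2+|B|^2\bigl(n-(2-\f1m)|B|^2\bigr)$.

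It remains to integrate and conclude. As $M$ is compact without boundary, $\int_M\Delta|B|^2\,dM=0$, hence
\begin{equation}\label{eq:S-int}
\int_M|\nabla B|^2\,dM+\int_M|B|^2\Bigl(n-\Bigl(2-\f1m\Bigr)|B|^2\Bigr)dM\le 0 ,
\end{equation}
which is exactly $(1.1)$, with the nonnegative term $\int_M|\nabla B|^2\,dM$ to spare. For the corollary, the pinching hypothesis $0\le|B|^2\le\f{n}{2-\f1m}$ makes the integrand $|B|^2\bigl[(2-\f1m)|B|^2-n\bigr]$ nonpositive pointwise, so by $(1.1)$ it vanishes identically and also $\int_M|\nabla B|^2\,dM=0$; hence $\nabla B\equiv0$, so $|B|^2$ is constant, while at each point $|B|^2\in\{0,\f{n}{2-\f1m}\}$, forcing $|B|^2\equiv0$ or $|B|^2\equiv\f{n}{2-\f1m}$. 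The step I expect to be the main obstacle is the sharp inequality \eqref{eq:S-alg}: the commutator bound and, above all, the reduction to a normal frame diagonalizing $(\tr(A_\a A_\be))$ are precisely what make the constant $2-\f1m$ optimal, the Cauchy--Schwarz step being tight exactly when all $\si_\a$ coincide; getting Simons' identity \eqref{eq:S-id} right, with all curvature contractions, is the other delicate point.
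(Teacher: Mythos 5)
Your proof is correct, and it is the classical Simons/Chern--do Carmo--Kobayashi argument. The paper itself gives no proof of this statement --- it is quoted verbatim from \cite{S} as background in the introduction --- so there is nothing to compare step by step; but it is worth noting that the algebraic core you isolate, namely bounding $\sum_{\a,\be}N([A_\a,A_\be])$ after diagonalizing the matrix $\bigl(\tr(A_\a A_\be)\bigr)$, is precisely the step that the rest of the paper sharpens: Lu's inequality (Lemma 2.3) replaces the crude bound $N([A,B])\le 2N(A)N(B)$ by the optimal estimate $\sum_{\a\ge 2}\|[A_1,A_\a]\|^2\le\|A_2\|^2+\sum_{\a\ge2}\|A_\a\|^2$, which is what upgrades the pinching constant $n/(2-\f1m)$ to conditions involving $|B|^2+\la_2$. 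Your derivation of the inequality, the use of Cauchy--Schwarz on the eigenvalues $\si_\a$ of the fundamental matrix, and the deduction of the corollary from the vanishing of the integrand together with $\nabla B\equiv 0$ are all sound.
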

    As shown by Chern-do Carmo-Kobayashi \cite{C-D-K} and B. Lawson \cite{L}, $|B|^2\equiv \f{n}{2-\f{1}{m}}$ means $M$ is a Clifford torus or a Veronese surface in $S^4$.
    By the Gauss equation, the scalar curvature $R$ of $M$ is completely determined by $|B|^2$ (i.e. $R=n(n-1)-|B|^2$), so Simons' theorem can be seen as an intrinsic
    rigidity result on the scalar curvature. Based on this phenomenon, S. S. Chern \cite{C} raised a well-known conjecture as follows,
    which has been listed by S. T. Yau \cite{Y} as one of the 120 open problems in the field of differential geometry.
    \begin{con} Let $M^n$
    be a compact minimal submanifold in $S^{n+m}$ with constant squared norm of the second fundamental form, then the value of $|B|^2$ must lies in a discrete subset
    of $\R$.
    \end{con}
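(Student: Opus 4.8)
This is S.\,S. Chern's conjecture, listed among Yau's problems, and in full generality it is open; what follows is a strategy for the range one can realistically hope to control, together with the point at which the obstruction sits. The natural starting point is the Simons-type identity for $\frac12\Delta|B|^2$. Fixing a local orthonormal normal frame with shape operators $A_\a$, minimality and the Codazzi equation give
\[
\tfrac12\,\Delta|B|^2 = |\nabla B|^2 + n|B|^2 - \sum_{\a,\be}\big(\tr(A_\a A_\be)\big)^2 - \sum_{\a,\be}\big|[A_\a,A_\be]\big|^2 .
\]
Since $|B|^2$ is constant the left side vanishes, so we have the \emph{pointwise} identity
\[
|\nabla B|^2 = \sum_{\a,\be}\big(\tr(A_\a A_\be)\big)^2 + \sum_{\a,\be}\big|[A_\a,A_\be]\big|^2 - n|B|^2 .
\]
The plan is to promote this into an algebraic constraint rigid enough to quantize $|B|^2$.

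First I would estimate the right-hand side from above via Lu's inequality \cite{Lu} on $\sum_{\a,\be}|[A_\a,A_\be]|^2$ together with the elementary Cauchy--Schwarz bound $\sum_{\a,\be}(\tr(A_\a A_\be))^2 \le |B|^4$, obtaining $|\nabla B|^2 \le c(n,m)\,|B|^4 - n|B|^2$ pointwise; the equality analysis in Lu's inequality (and in the earlier refinements) records exactly which families $\{A_\a\}$ are extremal. The levels $|B|^2\equiv 0$ and $|B|^2\equiv\tfrac{n}{2-1/m}$ are already isolated by Simons' theorem, so the real work lies above that threshold. To pin down the next values I would pair the inequality above with a \emph{lower} bound on $|\nabla B|^2$: differentiating the Codazzi equation once more produces a second Simons identity for $\Delta|\nabla B|^2$, which combined with a refined Kato inequality should force $\nabla B$ and the curvature operator into a parallel-type configuration; then $M$ is isoparametric and the classification of minimal isoparametric submanifolds (Münzner's restrictions when $m=1$, Terng's theory in general) quantizes $|B|^2$.

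The step I expect to be the genuine obstacle is the simultaneous control of $\nabla B$ and of the cubic term $\sum_{\a,\be}(\tr(A_\a A_\be))^2$. In the hypersurface case $m=1$ the single shape operator can be diagonalized, the Lie-bracket term disappears, and the conjecture is known for small $n$; in higher codimension the family $\{A_\a\}$ does not commute, the normal curvature enters, and that cubic term is simply not a function of $|B|^2$ alone, so the two estimates need not meet. A sensible intermediate target is the flat-normal-bundle case: there the $A_\a$ are simultaneously diagonalizable, the identity above becomes a relation among the common eigenvalue vectors $\lambda^{(i)}\in\R^m$ constrained by $\sum_i\lambda^{(i)}=0$ and $\sum_i|\lambda^{(i)}|^2=|B|^2$ constant, and one is reduced to a Diophantine-type analysis reproducing the discreteness as in the hypersurface work of Peng--Terng. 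For the conjecture in full I do not expect the Simons machinery by itself to close the gap; an unconditional proof seems to need a new input governing the off-diagonal interaction of the shape operators, and the honest deliverable is a conditional statement — small $|B|^2$, bounded $n$, or extra structure on the normal connection.
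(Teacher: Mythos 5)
The statement you were asked to prove is Chern's conjecture, and the paper presents it exactly as that: a conjecture environment quoted from Chern's 1968 report and Yau's problem list, with no proof attached, because the problem is open. There is therefore no proof in the paper to measure your attempt against, and your own assessment is the correct one — what you have written is a strategy outline, not a proof, and no such proof is currently known in the generality stated. Your description of the landscape is essentially accurate: the identity
\[
\tfrac12\,\Delta|B|^2 = |\nabla B|^2 + n|B|^2 - \sum_{\alpha,\beta}\big(\mathrm{tr}(A_\alpha A_\beta)\big)^2 - \sum_{\alpha,\beta}\big\|[A_\alpha,A_\beta]\big\|^2
\]
is the standard Simons identity for a minimal submanifold of the sphere, Simons' theorem isolates the first gap $[0,\,n/(2-\tfrac1m)]$, and the Peng--Terng second-gap results are confined to hypersurfaces.

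Two remarks on the sketch itself. First, $\sum_{\alpha,\beta}(\mathrm{tr}(A_\alpha A_\beta))^2$ is quartic in $B$, not cubic, and it equals $\sum_\alpha\lambda_\alpha^2$ where $\lambda_\alpha$ are the eigenvalues of the fundamental matrix $(S_{\alpha\beta})$ of this paper; it is governed by the spectrum of $S$, not by $|B|^2=\sum_\alpha\lambda_\alpha$ alone. That observation — together with Lu's inequality bounding $\sum_{\alpha,\beta}\|[A_\alpha,A_\beta]\|^2$ via $\lambda_1,\lambda_2$ — is precisely the engine behind the theorem of Lu quoted in the introduction and behind the paper's Main Theorems, which are pinching results on $|B|^2+\lambda_2$ rather than attacks on the conjecture. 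Second, constancy of $|B|^2$ makes the left side vanish but does not make the resulting pointwise identity rigid: the right side balances $|\nabla B|^2\ge 0$ against two quartic terms whose values are not functions of $|B|^2$, so, as you say, no quantization follows without a genuinely new input (your flat-normal-bundle reduction is a reasonable conditional target, and the $n\ge 4$, $\lambda_3\equiv 0$ rigidity in Section 4 of the paper is in that spirit). In short, the proposal is honest and broadly sound as a survey, but the statement is a conjecture, not a theorem, and neither you nor the paper proves it.
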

    From this viewpoint, Simons' theorem describes the first gap of $|B|^2$. For the hypersurface cases (i.e. $m=1$), Peng-Terng \cite{P-T,P-T2} made the first
    effort to the Chern conjecture and confirmed the second gap of $|B|^2$. This beautiful work attracts a lot of successive studies,
    see \cite{Ch,Y-C1,Y-C2,Y-C3,S-Y,W-X,Z,D-X,X-X,L-X-X}. On the other hand, for the higher codimensional cases,
    Li-Li \cite{L-L} and Chen-Xu \cite{C-X} independently got a rigidity theorem whose condition is weaker than Simons' theorem:
   \begin{thm}\label{Li}
   Let $M$ be an $n$-dimensional compact minimal submanifold
    in $S^{n+m}$ with $m\geq 2$. If $|B|^2\leq \f{2n}{3}$, then $M$ is a totally geodesic subsphere ($|B|^2\equiv 0$),
     or the Veronese surface (here $n=m=2$ and $|B|^2\equiv \f{4}{3}$).
   \end{thm}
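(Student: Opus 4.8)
The plan is to run a Simons-type integral argument, using in place of Simons' estimate on the ``reaction terms'' the sharper bound available once the codimension is at least $2$. Near a given point of $M$ fix a local orthonormal frame $e_1,\dots,e_n$ tangent to $M$ and $\nu_1,\dots,\nu_m$ normal to $M$, and write $A_\a$ for the fundamental (shape) matrix of $\nu_\a$; minimality gives $\tr A_\a=0$ for every $\a$, and $|B|^2=\sum_{\a}\|A_\a\|^2$ with $\|\cdot\|$ the Hilbert--Schmidt norm. First I would record the Simons identity for a minimal submanifold of the unit sphere $S^{n+m}$,
\begin{equation}\label{e:simons}
\f12\,\De|B|^2=|\n B|^2+n|B|^2-\sum_{\a,\be=1}^{m}\|[A_\a,A_\be]\|^2-\sum_{\a,\be=1}^{m}\lan A_\a,A_\be\ran^2,
\end{equation}
which comes from the Codazzi and Ricci equations together with the constancy of the ambient curvature.

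The heart of the argument is the pointwise algebraic inequality: for $m\ge2$ and symmetric $n\times n$ matrices $A_1,\dots,A_m$,
\begin{equation}\label{e:alg}
\sum_{\a,\be=1}^{m}\|[A_\a,A_\be]\|^2+\sum_{\a,\be=1}^{m}\lan A_\a,A_\be\ran^2\;\le\;\f32\Big(\sum_{\a=1}^{m}\|A_\a\|^2\Big)^2.
\end{equation}
The second sum is easy --- the Gram matrix $\big(\lan A_\a,A_\be\ran\big)$ is positive semidefinite with trace $|B|^2$, so $\sum_{\a,\be}\lan A_\a,A_\be\ran^2\le|B|^4$, equality iff all the $A_\a$ are proportional to one matrix --- but the two sums have to be handled together, since for the extremal configuration neither Lu's sharp bound \cite{Lu} on $\sum_{\a,\be}\|[A_\a,A_\be]\|^2$ nor the Gram bound is individually tight. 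The combined estimate \eqref{e:alg}, with a full description of its equality case, is the algebraic core of \cite{L-L} and \cite{C-X}; I would simply quote it.

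Integrating \eqref{e:simons} over the closed manifold $M$, the left side vanishes by the divergence theorem, so \eqref{e:alg} yields
\begin{equation}\label{e:int}
\int_M\Big(|\n B|^2+|B|^2\big(n-\tfrac32|B|^2\big)\Big)\,dM\;\le\;0.
\end{equation}
When $|B|^2\le\f{2n}{3}$ the factor $n-\tfrac32|B|^2$ is nonnegative, so both integrands in \eqref{e:int} are nonnegative and hence vanish identically: $\n B\equiv0$ (so $|B|^2$ is constant), $|B|^2\big(n-\tfrac32|B|^2\big)\equiv0$, and \eqref{e:alg} is an equality at every point. If $|B|^2\equiv0$ then $B\equiv0$ and $M$ is a totally geodesic great $n$-sphere; otherwise $|B|^2\equiv\f{2n}{3}$ and it remains to identify $M$.

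The last and hardest step is the rigidity analysis of this equality case. Feeding pointwise equality in \eqref{e:alg} into the equality discussion of the Li--Li/Chen--Xu lemma forces the shape operators into an essentially unique normal form: after an orthogonal change of the normal frame there are exactly two linearly independent fundamental matrices, tied together by quadratic relations, and these relations --- together with the Codazzi equation and $\n B\equiv0$ --- are realizable by a submanifold of a sphere only when $n=m=2$, in which case $\{A_1,A_2\}$ is (a rotation of) two of the traceless symmetric $2\times2$ matrices, i.e.\ the Veronese configuration. Invoking the classification of submanifolds of space forms with parallel second fundamental form (equivalently, the Chern--do Carmo--Kobayashi/Lawson \cite{C-D-K,L} identification of the extremal case of Simons' pinching, which for $m=2$ coincides with \eqref{e:alg}) then gives that $M$ is the Veronese surface, so $n=m=2$ and $|B|^2\equiv\f43$. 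I expect the real difficulty in exactly two places: (i) the sharp constant $\f32$ in \eqref{e:alg}, which needs the joint optimization of the bracket and Gram terms rather than either one separately; and (ii) the equality analysis, since it is there --- not in the integral estimate --- that one has to exclude all dimensions $n\ge3$ and single out the Veronese model, i.e.\ turn a pinching gap into a genuine classification.
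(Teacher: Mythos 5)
The paper contains no proof of Theorem \ref{Li}: it is quoted from Li--Li \cite{L-L} and Chen--Xu \cite{C-X} as background in the introduction, so there is nothing internal to compare your argument against. That said, your proposal is a correct reconstruction of the argument in those references: the Simons identity for minimal submanifolds of $S^{n+m}$, the joint algebraic estimate $\sum_{\a,\be}\|[A_\a,A_\be]\|^2+\sum_{\a,\be}\lan A_\a,A_\be\ran^2\le\f{3}{2}|B|^4$ valid for $m\ge2$, integration over the closed manifold, and identification of the extremal case. You are right that the two reaction sums must be optimized jointly (neither the commutator bound nor the Gram bound is individually sharp on the Veronese configuration), and right that the real work sits in the quoted lemma's equality case plus the passage from ``pointwise extremal algebraic data with $\n B\equiv0$'' to ``$n=2$ and Veronese''; since you cite both, your write-up is a reduction to \cite{L-L,C-X} rather than an independent proof, which is appropriate for a quoted theorem. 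One small elision: pointwise equality in your algebraic inequality follows because the integral of the nonnegative quantity $\f{3}{2}|B|^4-(\text{reaction terms})$ vanishes, not directly from the vanishing of the two integrands you display. Finally, note that the present paper's own machinery reaches a strictly stronger conclusion by a different decomposition: it diagonalizes the fundamental matrix $(S_{\a\be})$, applies Lu's Lemma \ref{Lu} to the rows attached to the top eigenvalue, and pinches $|B|^2+\la_2$ instead of $|B|^2$; since $\la_2\le\f{1}{2}|B|^2$, the hypothesis $|B|^2\le\f{2n}{3}$ implies $|B|^2+\la_2\le n$, so Theorem \ref{Lu4} subsumes Theorem \ref{Li}. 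Your $\f{3}{2}$-lemma route buys a self-contained classical proof; the eigenvalue route buys the sharper pinching that this paper then adapts to the Legendrian setting.
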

   For each $p\in M$, let $\{e_1,\cdots,e_n\}$ and $\{\nu_1,\cdots,\nu_m\}$ be orthonormal basis of the tangent space
   and the normal space at $p$, respectively, then
   \begin{equation}
   (S_{\a\be}):=\left(\sum_{i,j}\lan B_{e_ie_j},\nu_\a\ran \lan B_{e_ie_j},\nu_\be\ran\right)
   \end{equation}
   is called the {\it fundamental matrix} at $p$. Z. Q. Lu \cite{Lu} studied the rigidity problem via eigenvalues of fundamental matrices and established
   the following pinching theorem:
\begin{thm}\label{Lu4}
Let $M$ be an $n$-dimensional compact minimal submanifold in $S^{n+m}$ and $\la_2$ be the second large eigenvalue of the fundamental matrix
at each point. If $|B|^2+\la_2\leq n$, then $M$ is a totally geodesic subsphere ($|B|^2+\la_2\equiv 0$),
a Clifford torus ($|B|^2\equiv n$ and $\la_2\equiv 0$) or the Veronese surface ($|B|^2+\la_2\equiv 2$).
\end{thm}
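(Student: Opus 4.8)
The plan is a Simons-type Bochner argument for the length of the second fundamental form, with Lu's inequality controlling the reaction term, followed by the equality analysis and the classification of submanifolds with parallel second fundamental form.

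\textit{Setting up Simons' identity.} Write $h^{\alpha}_{ij}=\langle B_{e_ie_j},\nu_\alpha\rangle$ and let $A_\alpha=(h^{\alpha}_{ij})$ be the shape operators, so that $\operatorname{tr}A_\alpha=0$ for every $\alpha$ by minimality. Simons' identity \cite{S} reads
\begin{equation*}
\tfrac12\Delta|B|^2=|\nabla B|^2+n|B|^2-\sum_{\alpha,\beta}\big\|[A_\alpha,A_\beta]\big\|^2-\sum_{\alpha,\beta}\big(\operatorname{tr}(A_\alpha A_\beta)\big)^2 .
\end{equation*}
At a fixed point I rotate the normal frame so that the fundamental matrix is diagonal, $\operatorname{tr}(A_\alpha A_\beta)=\lambda_\alpha\delta_{\alpha\beta}$ with $\lambda_1\ge\cdots\ge\lambda_m\ge 0$ and $\sum_\alpha\lambda_\alpha=|B|^2$; then the $A_\alpha$ are mutually Frobenius-orthogonal traceless symmetric matrices with $\|A_\alpha\|^2=\lambda_\alpha$, and the reaction term equals $\mathcal Q:=\sum_\alpha\lambda_\alpha^2+\sum_{\alpha\ne\beta}\|[A_\alpha,A_\beta]\|^2$. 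The aim is to bound $\mathcal Q$ by $n|B|^2$.

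\textit{Estimating $\mathcal Q$ through Lu's inequality.} The first sum is elementary: $\sum_{\alpha\ge 2}\lambda_\alpha^2\le\lambda_2\sum_{\alpha\ge 2}\lambda_\alpha=\lambda_2(|B|^2-\lambda_1)$, so $\sum_\alpha\lambda_\alpha^2\le\lambda_1^2+\lambda_2(|B|^2-\lambda_1)$. For the Lie-bracket sum I would invoke Lu's inequality \cite{Lu}, which bounds $\sum_{\alpha\ne\beta}\|[A_\alpha,A_\beta]\|^2$ sharply in terms of the eigenvalues $\lambda_\alpha$ (the two largest carrying the weight, with equality confined to ``two-plane'' configurations). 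Substituting both into Simons' identity and using $|B|^2+\lambda_2\le n$, one hopes to reach $\tfrac12\Delta|B|^2\ge|\nabla B|^2\ge 0$; integrating over the closed $M$ then forces $\nabla B\equiv 0$, $|B|^2$ constant, and equality throughout the chain of inequalities.

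\textit{The main obstacle, and the endgame.} The heart of the matter is making the bound on $\mathcal Q$ sharp enough: in the regime where two normal eigenvalues $\lambda_1,\lambda_2$ dominate, the crude combination above --- indeed any purely pointwise bound --- is not strong enough, as one already sees on a pair of traceless shape operators supported in a common $2$-plane, one diagonal and one off-diagonal and suitably scaled, which satisfies $|B|^2+\lambda_2=n$ yet has $\mathcal Q>n|B|^2$. So the argument must use Lu's inequality at full strength together with its near-equality analysis, and, in the borderline two-eigenvalue case, a finer use of the Codazzi equation (for instance at a maximum point of $|B|^2$, where $\Delta|B|^2\le 0$ reverses the inequality, in the spirit of \cite{P-T}) to eliminate it; I expect this balancing to be the crux. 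Once $\nabla B\equiv 0$ and the equality case are in hand, $M$ has parallel second fundamental form and the admissible spectrum of the fundamental matrix is one of: all $\lambda_\alpha\equiv 0$ (totally geodesic); $\lambda_1\equiv n$ with all other $\lambda_\alpha\equiv 0$, i.e.\ a rank-one fundamental matrix and $|B|^2\equiv n$, which by the Chern--do Carmo--Kobayashi and Lawson classification \cite{C-D-K,L} is a Clifford torus inside a totally geodesic subsphere; or $\lambda_1\equiv\lambda_2$ with all other $\lambda_\alpha\equiv 0$ and $|B|^2\equiv\tfrac{2n}{3}$, in which case Theorem~\ref{Li} leaves only $n=2$ and the Veronese surface. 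This yields the asserted trichotomy.
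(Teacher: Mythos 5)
You have correctly diagnosed the central difficulty --- that running Simons' identity on $\Delta|B|^2$ and bounding the full reaction term $\mathcal Q$ pointwise cannot reach $n|B|^2$ under the hypothesis $|B|^2+\lambda_2\le n$ --- but the rescue you propose (a maximum-point argument for $|B|^2$ with a Peng--Terng-style use of Codazzi \cite{P-T}) is not the mechanism, and I do not see how it could be made to work: the failure you exhibit is not a borderline phenomenon confined to a single point, so localizing at a maximum of $|B|^2$ does not repair the pointwise deficit. The actual proof in \cite{Lu} (and the Legendrian analogue carried out in Proposition \ref{p1} of this paper) abandons $|B|^2=\sum_\alpha\lambda_\alpha$ entirely and tracks only the \emph{largest} eigenvalue $\lambda_1$ of the fundamental matrix. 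The componentwise Simons identity for the top diagonal entry reads
\begin{equation*}
\tfrac12\,\nabla^2 S_{11}=n\lambda_1-\lambda_1^2-\sum_{\beta}\big\|[A_1,A_\beta]\big\|^2+|\nabla A_1|^2,
\end{equation*}
and Lemma \ref{Lu} applied with $A_1$ normalized gives precisely $\sum_\beta\|[A_1,A_\beta]\|^2\le\lambda_1\big(\sum_{\beta\neq 1}\lambda_\beta+\lambda_2\big)$, whence
\begin{equation*}
\tfrac12\,\Delta\lambda_1\;\ge\;\lambda_1\big(n-|B|^2-\lambda_2\big)+|\nabla A_1|^2\;\ge\;0 .
\end{equation*}
It is exactly the restriction to the single direction $\alpha=1$ that produces the sharp combination $|B|^2+\lambda_2$; summing over all $\alpha$, as you do, replaces $\lambda_2$ by contributions from every eigenvalue and destroys the estimate. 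Since $\lambda_1$ is only Lipschitz, the inequality is made rigorous by computing $\Delta g_m$ for $g_m=\big(\mathrm{tr}(S^m)\big)^{1/m}$, discarding the gradient terms via Cauchy--Schwarz, and letting $m\to\infty$ before integrating over $M$; this is the step your proposal is missing and it cannot be bypassed.

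Your endgame also needs the equality analysis of Lemma \ref{Lu} rather than the classification by parallel second fundamental form alone: integrating the displayed inequality forces equality in Lu's inequality at every point where $\lambda_1>0$, and it is the rigidity statement of that lemma (the configuration $A_1=\lambda\,\mathrm{diag}(k,-I_k,O)$, $A_\alpha=\mu(E_{1\alpha}+E_{\alpha 1})$) that pins down the admissible spectra, after which the Clifford and Veronese cases follow as you indicate. As written, your list of three possible equality spectra is asserted rather than derived.
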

Observing that $|B|^2=\text{tr}(S_{\a\be})=\sum\limits_{\a} \la_\a$, $|B|^2\leq \f{2n}{3}$ implies $|B|^2+\la_2\leq n$ and hence
Theorem \ref{Lu4} is an improvement of Theorem \ref{Li}. However, up to now, for the cases of $n\geq 3$ or $m\geq 3$, it is unknown whether there exists
a pinching condition forcing $M$ to be a non-totally-geodesic minimal submanifold.

Given a submanifold $M^n\subset S^{n+m}$, the cone $CM$ over $M$ is defined as
\begin{equation}
CM:=\{tx\in \R^{n+m+1}:t\in \R,x\in M\},
\end{equation}
which turns to be a minimal submanifold of $\R^{n+m+1}$ whenever $M$ is minimal (see e.g. \S 1.4 of \cite{X}). When $m=n+1$,
$M$ is called a {\it Legendrian submanifold} if and only if $CM$ is a {\it Lagrangian submanifold} of
$\R^{2n+2}=\C^{n+1}$, i.e. the complex structure $J$ of $\C^{n+1}$ carries each tangent space of $M$ onto its corresponding normal space.
On the other hand, let
\begin{equation}
\pi:(z_1,\cdots,z_{n+1})\in S^{2n+1}\mapsto [(z_1,\cdots,z_{n+1})]\in \Bbb{CP}^n(4)
\end{equation}
be the Hopf fibration,
where $|z_1|^2+\cdots+|z_{n+1}|^2=1$ and $\Bbb{CP}^n(4)$ denotes the $n$-dimensional complex projective space with constant holomorphic
sectional curvature $4$, then $M$ is a minimal Legendrian submanifold if and only if $\pi(M)$ is a minimal Lagrangian submanifold of
$\Bbb{CP}^n(4)$ (see e.g. \cite{C-L-U}). Therefore, the rigidity properties of the above 2 classes of submanifolds have essential relationships.
Through the works of Chen-Ogiue\cite{C-O}, Yamaguchi-Kon-Miyahara\cite{Y-K-M} and Li-Li\cite{L-L}, we know:
\begin{thm}
Let $M^n$ be a compact minimal Legendrian submanifold in $S^{2n+1}$. If $|B|^2\leq \f{2}{3}(n+1)$, then
$M$ is either a totally geodesic subsphere ($|B|^2\equiv 0$) or a flat minimal Legendrian torus (here
$n=2$ and $|B|^2\equiv 2$).
\end{thm}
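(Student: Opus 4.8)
The plan is to run the Simons--Bochner machinery adapted to the Legendrian setting, and then to analyze the equality case. First I would record the structural consequences of the Legendrian condition. Fix a point $p\in M$, an orthonormal frame $\{e_1,\cdots,e_n\}$ of $T_pM$, and use $\{Je_1,\cdots,Je_n,\xi\}$ as the induced orthonormal frame of the normal space of $M$ in $S^{2n+1}$, where $\xi=-Jx$ is the Reeb field along $M$. Setting $h_{ijk}:=\lan B(e_i,e_j),Je_k\ran$, the Legendrian condition yields $\lan B(e_i,e_j),\xi\ran=0$ and the full symmetry of $h_{ijk}$ in $i,j,k$, while minimality reads $\sum_i h_{iik}=0$ for each $k$. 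Hence $B$ is encoded by a trace-free totally symmetric $3$-tensor, $|B|^2=\sum_{i,j,k}h_{ijk}^2$, and the shape operators $A_k:=A_{Je_k}$ have entries $(A_k)_{ij}=h_{ijk}$.

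Next I would establish the Simons-type identity
\begin{equation}
\f12\De|B|^2=|\n B|^2+(n+1)|B|^2-\Big(\sum_{k,l}\|[A_k,A_l]\|^2+\sum_{k,l}\big(\tr(A_kA_l)\big)^2\Big),
\end{equation}
which follows from the Codazzi and Ricci equations together with the single contact-geometric identity $\bn_X\xi=-JX$ ($\bn$ the Levi-Civita connection of $S^{2n+1}$); the mixing of $\xi$ and $JTM$ in the normal connection is precisely what upgrades the usual Simons constant $n$ to $n+1$ (equivalently, it is the holomorphic-sectional-curvature contribution after descending to the minimal Lagrangian $\pi(M)\subset\cp n(4)$ via the Hopf fibration). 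Then I would feed in the algebraic inequality of Li--Li/Chen--Xu for symmetric matrices, $\sum_{k,l}\|[A_k,A_l]\|^2+\sum_{k,l}(\tr(A_kA_l))^2\le\f32\big(\sum_k\tr A_k^2\big)^2=\f32|B|^4$ (one could alternatively invoke Lu's sharper Lie-bracket estimate \cite{Lu}), to obtain the pointwise inequality $\f12\De|B|^2\ge|\n B|^2+|B|^2\big((n+1)-\f32|B|^2\big)$.

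Integration over the compact manifold $M$ annihilates the left-hand side, so the hypothesis $|B|^2\le\f23(n+1)$ forces $\n B\equiv0$ and, pointwise, $|B|^2\in\{0,\f23(n+1)\}$; by connectedness either $|B|^2\equiv0$, so $M$ is a totally geodesic subsphere, or $|B|^2\equiv\f23(n+1)>0$ with parallel second fundamental form. In the latter case every inequality above is saturated --- in particular the Li--Li inequality --- which rigidly constrains the algebraic type of the family $\{A_k\}$ up to a change of orthonormal frame. The step I expect to be the main obstacle is to combine this saturation with the total symmetry and trace-freeness of $h_{ijk}$ and with $\n B\equiv0$: exactly as in the equality discussion behind Theorem \ref{Li}, the forced rank pattern of the shape operators is incompatible with $\sum_i h_{iik}=0$ unless $n=2$, so the case $n\ge3$ is impossible. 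When $n=2$ one has $|B|^2\equiv2$, the Gauss equation makes $M$ flat, and the classification of flat (equivalently parallel-$B$) minimal Legendrian surfaces in $S^5$ identifies $M$ with the flat minimal Legendrian torus, completing the proof.
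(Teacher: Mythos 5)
First, a contextual remark: the paper does not prove this statement --- it is quoted from Chen--Ogiue, Yamaguchi--Kon--Miyahara and Li--Li --- so your proposal can only be measured against the paper's general machinery. Your overall strategy is the standard one and is consistent with that machinery: your Simons identity agrees with the paper's (\ref{de3}) after summing over $l$ (with the caveat that the constant $n+1$ is correct only if $|\n B|^2$ is read as $|\n\si|^2$, the derivative of the tangential $3$-tensor; the extra $|B|^2$ comes from $\lan(\n_XB)(Y,Z),JF\ran=-\si(X,Y,Z)$, which you correctly attribute to the mixing of $\xi$ with $JTM$). Feeding in the Li--Li/Chen--Xu estimate and integrating does give $\n\si\equiv0$ and $|B|^2\equiv0$ or $|B|^2\equiv\f{2}{3}(n+1)$. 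Up to this point the argument is sound.

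The gap is in your exclusion of $n\geq3$. You assert that the equality configuration of the Li--Li inequality is ``incompatible with $\sum_ih_{iik}=0$ unless $n=2$.'' It is not: equality forces (after an orthonormal change) exactly two nonzero shape operators of equal norm in the Chern--do Carmo--Kobayashi form, say $A_1=\mu(E_{12}+E_{21})$ and $A_2=\mu(E_{11}-E_{22})$, and the corresponding tensor $h_{112}=h_{121}=h_{211}=\mu$, $h_{222}=-\mu$ (all other components zero) is totally symmetric and trace-free on $\R^n$ for every $n\geq2$, saturates the inequality for every $n$, and is compatible with the Legendrian linkage between normal and tangent rotations. So no algebraic contradiction arises, and the constant $\mu^2=\f{n+1}{6}$ adjusts freely to the dimension. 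What actually kills $n\geq3$ is geometric: since $\n\si\equiv0$, the nullity distribution $\mc{D}:=\mathrm{span}\{e_3,\cdots,e_n\}$ is parallel, hence the curvature operator preserves the splitting and $\lan R_{e_1e_p}e_1,e_p\ran=0$ for $p\geq3$; but the Gauss equation (\ref{Gauss}) gives $\lan R_{e_1e_p}e_1,e_p\ran=1$ because $\si(e_p,\cdot,\cdot)=0$. This is precisely the contradiction the paper itself runs in the equality analysis of its main theorem (the case $k+1<n$), and you need it --- or an appeal to the classification of parallel Legendrian submanifolds --- to close the argument. The $n=2$ endpoint ($R=n(n-1)-|B|^2=0$, hence a flat torus) is then handled correctly.
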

Similarly as in Theorem \ref{Li} and Theorem \ref{Lu4}, this pinching condition is optimal only for the 2-dimensional case.

Among a lot of successive works on this subject (for an imcomplete list, see e.g. \cite{Y2,Y3,B-O,B-O2,O,N-T,U,G,D-V,Xia,L-S}), Luo-Sun-Yin \cite{L-S-Y}
firstly found a pinching condition
which gives a characterization of the Calabi tori:
\begin{thm}
Let $M^n$ be a compact minimal Legendrian submanifold in $S^{2n+1}$, and for each $p\in M$,
$\Theta(p):=\max\limits_{v\in T_p M,|v|=1}|B(v,v)|.$
If $|B|^2\leq \f{n+2}{\sqrt{n}}\Theta$, then $M$ is either a totally geodesic subsphere ($|B|^2\equiv 0$)
or a Calabi torus ($|B|^2\equiv \f{n+2}{\sqrt{n}}\Theta$). Especially if $n=3$, the pinching condition
can be changed weakly to $|B|^2\leq 2+\Th^2$.
\end{thm}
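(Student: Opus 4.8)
The plan is a Simons--Bochner argument coupled with a sharp pointwise algebraic estimate. Since $M$ is Legendrian, the second fundamental form is encoded by the fully symmetric $3$-tensor $h_{ijk}=\lan B(e_i,e_j),Je_k\ran$, which is trace-free in every pair of slots by minimality; write $A_\a=(h_{ij\a})_{i,j}$ for the associated trace-free symmetric endomorphisms, $\a=1,\dots,n$. Specializing the Simons formula on $S^{n+p}$ and using the total symmetry of $h$ together with the contact structure, one has
\[\f12\De|B|^2=|\nabla B|^2+(n+1)|B|^2-\mathcal P,\qquad \mathcal P:=\sum_{\a,\be}\|[A_\a,A_\be]\|^2+\sum_{\a,\be}\big(\tr(A_\a A_\be)\big)^2,\]
where the last sum is $|S_{\a\be}|^2=\sum_a\mu_a^2$, with $\mu_1\ge\cdots\ge\mu_n\ge0$ the eigenvalues of the fundamental matrix $S_{\a\be}=\sum_\a A_\a^2$ and $\sum_a\mu_a=|B|^2$. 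A direct computation shows the Calabi torus satisfies $\nabla B\equiv0$, $|B|^2\equiv\f{(n-1)(n+2)}{n}$, $\Th\equiv\f{n-1}{\sqrt n}$ and $\mathcal P\equiv(n+1)|B|^2$, so both the coefficient $n+1$ and the pinching constant $\f{n+2}{\sqrt n}$ are precisely the values making this example borderline. Hence it suffices to establish the pointwise inequality
\[\mathcal P\le(n+1)|B|^2\qquad\text{whenever}\qquad |B|^2\le\f{n+2}{\sqrt n}\Th,\]
together with a characterization of equality; integrating the Bochner identity over the closed manifold then forces $\nabla B\equiv0$ and $\mathcal P\equiv(n+1)|B|^2$, after which one classifies.

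\textbf{The pointwise estimate.} Fix a point with $|B|\ne0$; then $\Th>0$, since $\Th=0$ would give $B(v,v)\equiv0$ hence $B\equiv0$ by polarization. Choose the tangent frame so that $e_1$ realizes $\Th=|B(e_1,e_1)|$, with the compatible normal frame $\{Je_1,\dots,Je_n\}$. The first- and second-order conditions for this constrained maximum read
\[\lan B(e_1,e_1),B(e_1,w)\ran=0,\qquad \lan B(e_1,e_1),B(w,w)\ran+2|B(e_1,w)|^2\le2\Th^2\]
for all unit $w\perp e_1$; the first says $\sum_k h_{11k}h_{1jk}=0$ for $j\ge2$, i.e. $A_1^2e_1=\Th^2e_1$, which with the symmetry of $A_1$ and the second condition normalizes the frame along the $\Th$-direction. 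One then bounds the two pieces of $\mathcal P$ separately: the fundamental-matrix part via $\sum_a\mu_a^2\le(\sum_a\mu_a)^2=|B|^4$ together with refined upper bounds on $\mu_1,\mu_2$ in terms of $\Th$ (using $\Th^2\le\mu_1$); and the Lie-bracket part $\sum_{\a,\be}\|[A_\a,A_\be]\|^2$ via Lu's inequality \cite{Lu} on the squared norm of Lie brackets of symmetric matrices, which controls it by eigenvalue data of $S_{\a\be}$. The two extremality relations, the full symmetry of $h$ and minimality are used to make these bounds tight. Finally, since $\Th^2\le|B|^2$ always, the hypothesis yields $|B|^2\le\f{n+2}{\sqrt n}\Th\le\f{n+2}{\sqrt n}|B|$, whence the scale bound $|B|\le\f{n+2}{\sqrt n}$; inserting this converts the homogeneous degree-four estimate for $\mathcal P$ into the linear bound $\mathcal P\le(n+1)|B|^2$. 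Equality propagates back to: in a suitable frame, $h_{111}=\la_1$, $h_{1jj}=\la_2$ for $j\ge2$, all other components zero, with $\la_1=-(n-1)\la_2$ and $\la_2^2=\f1n$ --- the Calabi normal form.

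\textbf{Conclusion and the case $n=3$.} Integrating, $0=\int_M\big(|\nabla B|^2+(n+1)|B|^2-\mathcal P\big)\ge\int_M|\nabla B|^2\ge0$, so $\nabla B\equiv0$ and $\mathcal P\equiv(n+1)|B|^2$; in particular $|B|^2$ is harmonic, hence constant. If $|B|^2\equiv0$, then $M$ is a totally geodesic subsphere. Otherwise $|B|^2$ is a positive constant, $M$ has parallel second fundamental form, and at every point $h$ has the Calabi normal form above; integrating the structure equations (or quoting the classification of minimal Legendrian submanifolds with parallel second fundamental form), $M$ must be the Calabi torus. For $n=3$ one sharpens the pointwise estimate: since $3\times3$ trace-free symmetric matrices are explicitly diagonalizable, the critical quadratic and quartic forms entering the bounds for $\mathcal P$ become fully tractable, and one obtains $\mathcal P\le4|B|^2$ already under the weaker hypothesis $|B|^2\le2+\Th^2$ (again sharp on the Calabi torus in $S^7$, where $|B|^2=\f{10}{3}=2+\Th^2$); the same integration argument then applies.

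\textbf{Main obstacle.} The crux is the pointwise inequality $\mathcal P\le(n+1)|B|^2$: one must bound the Lie-bracket sum $\sum_{\a,\be}\|[A_\a,A_\be]\|^2$ sharply and, simultaneously, pin the equality locus down to exactly the Calabi normal form. This is a delicate matrix inequality in which the full symmetry of $h$, the trace conditions, the two extremality relations at the $\Th$-maximum, and the scale bound $|B|\le\f{n+2}{\sqrt n}$ must all be combined; Lu's inequality \cite{Lu} is the key input for the bracket term, and crude estimates (Böttcher--Wenzel, $\sum\mu_a^2\le|B|^4$) are by themselves far too lossy.
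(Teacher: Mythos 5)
A preliminary remark: the statement you are proving is quoted in this paper from Luo--Sun--Yin \cite{L-S-Y} without proof; the paper's own arguments establish a different pinching theorem, on $|B|^2+\la_2$. So your proposal can only be judged on its internal coherence and against the method of \cite{L-S-Y}. Your skeleton --- the Simons identity $\f{1}{2}\De|B|^2=|\n B|^2+(n+1)|B|^2-\mathcal{P}$ for the symmetric $3$-tensor (which is exactly what the paper's Simons-type identity yields upon contracting with $\si_l$ and summing over $l$), a pointwise bound $\mathcal{P}\le(n+1)|B|^2$ under the hypothesis, integration over the closed manifold, and an equality analysis --- is the correct frame, and your Calabi-torus numerics ($|B|^2\equiv\f{(n-1)(n+2)}{n}$, $\Th\equiv\f{n-1}{\sqrt{n}}$, $\mathcal{P}\equiv(n+1)|B|^2$, and $2+\Th^2=\f{10}{3}$ for $n=3$) all agree with the normal form derived in Section 3 of the paper.

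The gap is that the pointwise inequality $\mathcal{P}\le(n+1)|B|^2$, which is the entire content of the theorem, is never actually proved: you list ingredients and assert that they ``make these bounds tight.'' Moreover, the one mechanism you do spell out cannot work. You propose to use the hypothesis only through its consequence $|B|\le\f{n+2}{\sqrt{n}}$ (obtained via $\Th\le|B|$) and then to ``convert the homogeneous degree-four estimate for $\mathcal{P}$ into the linear bound.'' By homogeneity, a degree-four bound $Q$ satisfying $Q\le(n+1)|B|^2$ for all admissible configurations with $|B|\le\f{n+2}{\sqrt{n}}$ must satisfy $Q\le\f{n(n+1)}{(n+2)^2}|B|^4$ identically; evaluated on the Calabi configuration this equals $\f{(n+1)(n-1)^2}{n}$, which is strictly smaller than $\mathcal{P}=\f{(n+1)(n-1)(n+2)}{n}$ there. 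So the homogeneous estimate your argument requires is false precisely at the model that must saturate the theorem, and the route is structurally incapable of being sharp (both $\Th\le|B|$ and $|B|^2\le\f{(n+2)^2}{n}$ are strict on the Calabi torus). The pinching hypothesis has to enter the algebraic estimate pointwise through $\Th$ itself, as in the inhomogeneous lemma of \cite{L-S-Y}, not through a derived scale bound on $|B|$. Note also that Lu's inequality, being phrased in terms of the eigenvalues of the fundamental matrix, naturally yields $\sum_j\|[\si_l,\si_j]\|^2\le\la_l(|B|^2+\la_2)$ and hence the $|B|^2+\la_2\le n+1$ pinching proved in this paper; it does not by itself produce a bound involving $\Th$. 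The $n=3$ refinement is asserted with no argument at all.
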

Note that the definition of the Calabi tori will be given in Theorem \ref{Calabi}. These examples firstly appeared in \cite{N}
when H. Naitoh studied isotropic Lagrangian submanifolds in $\Bbb{CP}^n$ with parallel second fundamental form, and were described
from various viewpoints by Castro-Li-Urbano \cite{C-L-U} and Li-Wang \cite{L-W}.

In the present paper, we study the rigidity properties of compact minimal Legendrian submanifolds via the fundamental matrices.
By utilizing Lu's inequality (see Lemma \ref{Lu}), we can establish the following pinching theorem, giving a new characterization
of the Calabi tori:
\begin{main-thm}
Let $M$ be an $n$-dimensional compact minimal Legendrian submanifold in $S^{2n+1}$
and $\la_2$ be second large eigenvalue of the fundamental matrix at each point. If $|B|^2+\la_2\leq n+1$, then $M$ is either a totally geodesic
subsphere (here $|B|^2+\la_2\equiv 0$) or a Calabi torus (here $|B|^2+\la_2\equiv n+1$). This pinching condition is equivalent to
$n^2-n-2\leq R+\mu_2\leq n^2-1$, where $R$ is the scalar curvature of $M$ and $\mu_2$ denotes the second small eigenvalue of
the Ricci curvature tensor.
\end{main-thm}

This means the above pinching condition is optimal for all dimensions. Essentially, this conclusion gives an intrinsic obstruction
for each compact Riemannian manifold becoming a minimal Legendrian submanifold in the unit Euclidean sphere.

Moreover, we establish another rigidity theorem, which shows the pinching condition can be weakened under additional conditions,
e.g. $\la_3\equiv 0$.
\begin{main-thm}\label{m2}
Let $M$ be an $n$-dimensional compact minimal Legendrian submanifold in $S^{2n+1}$ with $n\geq 3$, such that
$\la_3\equiv 0$. If $n\geq 4$, then $M$ has to be totally geodesic. If $n=3$, then $|B|^2\leq \f{16}{3}$ (or $|B|^2\geq \f{16}{3}$)
forces $|B|^2\equiv 0$ or $|B|^2\equiv \f{16}{3}$ (or $|B|^2\equiv \f{16}{3}$). Moreover, $|B|^2\equiv \f{16}{3}$ if and only if
$\pi(M)$ is the equivariant Lagrangian minimal $3$-sphere in $\Bbb{CP}^3(4)$ (see \cite{L-T}), where $\pi$ denotes the Hopf fibration.
\end{main-thm}

This paper will be organized as follows.

In Section \ref{preliminaries}, we introduce the conception of Legendrian submanifolds
in unit Euclidean spheres. From the second fundamental form, we can define a tri-linear symmetric tensor $\si$ and the fundamental
matrix $(S_{ij})$ at each point, whose eigenvalues directly determine the eigenvalues of Ricci curvature tensor, due to the Gauss equation.
Afterwards, the Simons-type identity on the second derivative of $\si$ and Lu's inequality on the upper bound of the squared norm
of Lie brackets of symmetric matrices shall be introduced, which play a crucial role in the following text.

To derive rigidity theorems, it is natural to compute the Laplacian of $\la_1$, i.e. the largest eigenvalue of the fundamental matrix,
but $\la_1$ is not always smooth. To overcome this obstacle, we consider the smooth function
$f_m:=\text{tr}(S^m)=\sum\limits_i \la_i^m$  as in \cite{Lu}. By calculating the Laplacian of $g_m:=f_m^{\f{1}{m}}$ and letting $m\ra \infty$,
we deduce a Simons-type integral inequality (see Proposition \ref{p1}). Based on this inequality, by carefully examining the conditions
when the equality holds, we establish a pinching theorem on $|B|^2+\la_2$, giving a new characterization of the Calabi tori. These are
what we shall do in Section \ref{pinching}.

$\la_3\equiv 0$ is equivalent to saying that the rank of the Gauss map of $M$ is $2$ whenever $|B|^2\neq 0$. Thereby, the Gauss map
is a submersion of $M$ onto a Riemannian surface. By studying the integrability conditions from the viewpoint of complex analysis, we establish a structure theorem for
this type of Legendrian submanifolds (see Theorem \ref{structure}), which immediately implies Main Theorem \ref{m2}. This completes
the whole paper.

	\bigskip\bigskip

    \Section{Preliminaries}{Preliminaries}\label{preliminaries}

    \subsection{Legendrian submanifolds in the unit spheres}

Let $F$ be an isometric immersion from an $n$-dimensional Riemannian manifold $M$ into the
complex Euclidean
$(n+1)$-space $\C^{n+1}:=\{z=(z^1,\cdots,z^{n+1}):z^k\in \C\}$, which is equipped with the canonical complex structure $J$
and the Euclidean inner product $\lan\cdot,\cdot\ran$. If the position vector $F(p)$ of each $p\in M$ always lies
in the unit sphere $S^{2n+1}:=\{z\in \C^{n+1}:\lan z,z\ran=1\}$, $M$ becomes a submanifold of $S^{2n+1}$ and
\begin{equation}
\C^{n+1}=\R F(p)\oplus T_p S^{2n+1}=\R F(p)\oplus T_p M\oplus N_p M
\end{equation}
where $T_p M$ and $N_p M$ are the tangent space and the normal space of $M$ at $p$, respectively. Moreover,
 $M$ is called {\it Legendrian} if and only if
    \begin{equation}
    J(T_p M)\subset N_pM, JF(p)\subset N_pM.
    \end{equation}

Denote by $\n,\ol{\n}$ and $\p$ the Levi-Civita connections on $M, S^{2n+1}$ and $\C^{n+1}$, then
for any tangent vector fields $X,Y$ on $M$,
\begin{equation}
\p_X Y=\ol{\n}_X Y-\lan Y,X\ran F
\end{equation}
and
\begin{equation}
\ol{\n}_X Y=\n_X Y+B(X,Y)
\end{equation}
with $B$ the {\it second fundamental form} of $M$ in $S^{2n+1}$ taking values in the normal bundle $NM$.
In conjunction with $\p_X J=0$, we have
\begin{equation}\aligned
\lan B(X,Y),JF\ran&=\lan \p_X Y,JF\ran=-\lan Y,\p_X(JF)\ran\\
                  &=-\lan Y,JX\ran=0.
\endaligned
\end{equation}
Define
    \begin{equation}
    \si(X,Y,Z):=\lan B(X,Y),JZ\ran,\qquad \forall X,Y,Z\in \G(TM).
    \end{equation}
    then
    $$\aligned
    \si(X,Y,Z)&=\lan \p_X Y,JZ\ran=-\lan Y,\p_X JZ\ran=-\lan Y,J(\p_X Z)\ran\\
    &=\lan \p_X Z,JY\ran=\si(X,Z,Y).
    \endaligned$$
  Along with the symmetry of $B$, we observe that
    $\si$ is a tri-linear symmetric tensor.

    Let $\{E_1,\cdots,E_n\}$ be an arbitrary orthonormal frame field on $M$, then
\begin{equation}
S(X,Y):=\sum_{i,j}\si(E_i,E_j,X)\si(E_i,E_j,Y)
\end{equation}
is a bilinear nonnegative definite symmetric tensor.
Define
\begin{equation}
\left(S_{ij}\right):=\left(S(E_i,E_j)\right)
\end{equation}
to be the {\it fundamental matrices} of $M$ and let
\begin{equation}
\la_1\geq\la_2\geq\cdots \geq\la_n
\end{equation}
be the eigenvalues of $\left(S_{ij}\right)$, then
\begin{equation}
\sum_i \la_i=\text{tr }S=\sum_{i,j,k}\si(E_i,E_j,E_k)\si(E_i,E_j,E_k)=|\si|^2=|B|^2.
\end{equation}

Taking the trace of $B$ gives the {\it mean curvature vector} $H$ of $M$, i.e.
\begin{equation}
H:=\sum_i B(E_i,E_i).
\end{equation}
Now we assume $M$ is a {\it minimal submanifold} of $S^{2n+1}$, i.e. $H\equiv 0$ everywhere on $M$. For each connection $\n$,
let
\begin{equation}
R_{XY}:=-\n_X\n_Y+\n_Y\n_X+\n_{[X,Y]}
\end{equation}
be the associated curvature tensor, then the {\it Gauss equation} says
\begin{equation}\label{Gauss}\aligned
&\lan R_{XY}Z,W\ran=\lan \ol{R}_{XY},Z,W\ran+\lan B(X,Z),B(Y,W)\ran-\lan B(X,W),B(Y,Z)\ran\\
=&\lan X,Z\ran \lan Y,W\ran-\lan X,W\ran\lan Y,Z\ran+\sum_i \si(X,Z,E_i)\si(Y,W,E_i)-\sum_i \si(X,W,E_i)\si(Y,Z,E_i)
\endaligned
\end{equation}
and taking the trace of $R$ implies
\begin{equation}\label{Ric}\aligned
&\text{Ric}(X,Y):=\sum_j \lan R_{XE_j}Y,E_j\ran\\
=&(n-1)\lan X,Y\ran+\sum_{i,j}\si(X,Y,E_i)\si(E_j,E_j,E_i)-\sum_{i,j}\si(X,E_j,E_i)\si(Y,E_j,E_i)\\
=&(n-1)\lan X,Y\ran-S(X,Y).
\endaligned
\end{equation}
i.e. $\la$ is an eigenvalue of the fundamental matrix if and only if $n-1-\la$ is an eigenvalue
of the Ricci curvature tensor of $M$ at the considered point. Again taking the trace of both sides of
(\ref{Ric}), we see the scalar curvature of $M$ equals $n(n-1)-|B|^2$ pointwisely.

The induced normal connection on the normal bundle $NM$ is defined by
\begin{equation}
\n_X \nu=(\ol{\n}_X \nu)^N\qquad X\in \G(TM),\nu\in \G(NM),
\end{equation}
whose corresponding curvature tensor is defined by $R^\bot$.
Then the {\it Ricci equation} says
\begin{equation*}\aligned
&\lan R^\bot_{XY}JZ,JW\ran\\
=&\lan \ol{R}_{XY}JZ,JW\ran+\sum_i \lan B(X,E_i),JZ\ran\lan B(Y,E_i),JW\ran-\sum_i\lan B(X,E_i),JW\ran\lan B(Y,E_i),JZ\ran\\
=&\sum_i \si(X,Z,E_i)\si(Y,W,E_i)-\sum_i \si(X,W,E_i)\si(Y,Z,E_i).
\endaligned
\end{equation*}
Let
\begin{equation}
(\n_X B)(Y,Z):=\n_X (B(Y,Z))-B(\n_X Y,Z)-B(Y,\n_X Z),
\end{equation}
then
\begin{equation}
\aligned
&\lan (\n_X B)(Y,Z),JF\ran\\
=&\lan \n_X (B(Y,Z)),JF\ran=-\n_X \lan B(Y,Z),JF\ran-\lan B(Y,Z),\p_X(JF)\ran\\
=&-\lan B(Y,Z),JX\ran=-\si(X,Y,Z).
\endaligned
\end{equation}
and the {\it Codazzi equation} says
\begin{equation}
(\n_X B)(Y,Z)=(\n_Y B)(X,Z),
\end{equation}
which implies
\begin{equation}
(\n_X \si)(Y,Z,W)=(\n_Y\si)(X,Z,W)=(\n_X \si)(Z,Y,W)=(\n_X \si)(Y,W,Z)
\end{equation}
i.e. $\n\si$ is a four-linear symmetric tensor.

\subsection{The Simons type identity}
The following Simons-type identity play a crucial part in the present paper (see e.g. \cite{L-S-Y}):
\begin{lem}
Assume $M$ is a minimal Legendrian submanifold in $S^{2n+1}$, then
\begin{equation}\label{de1}
\aligned
&\n^2 \si(X,Y,Z):=\sum_i (\n_{E_i}\n_{E_i}\si)(X,Y,Z)\\
=&(n+1)\si(X,Y,Z)-\sum_j[S(X,E_j)\si(E_j,Y,Z)+S(Y,E_j)\si(E_j,Z,X)+S(Z,E_j)\si(E_j,X,Y)]\\
&+2\sum_{i,j,k}\si(X,E_j,E_k)\si(Y,E_k,E_i)\si(Z,E_i,E_j).
\endaligned
\end{equation}
Consequently, let
\begin{equation}
\si_l:=\si(\cdot,\cdot,E_l)
\end{equation}
be the second fundamental form on the direction $JE_l$,
then in terms of matrix notations,
we have
\begin{equation}\label{de3}
\n^2 \si_l=(n+1)\si_l-\sum_j \lan \si_l,\si_j\ran \si_j-\sum_j [\si_j,[\si_j,\si_l]].
\end{equation}
\end{lem}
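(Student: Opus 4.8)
The plan is to derive the Simons-type identity \eqref{de1} by commuting covariant derivatives in the Laplacian $\n^2\si$ and then feeding in the Gauss, Codazzi, and Ricci equations established above, after which \eqref{de3} is just the translation of \eqref{de1} into matrix notation. Concretely, I would start from $\n^2\si(X,Y,Z)=\sum_i(\n_{E_i}\n_{E_i}\si)(X,Y,Z)$ and first use the Codazzi symmetry $(\n_X\si)(Y,Z,W)=(\n_Y\si)(X,Z,W)$ (proved in the Preliminaries to say that $\n\si$ is totally symmetric) to move the two $E_i$-derivatives into the first two slots: $\sum_i(\n_{E_i}\n_{E_i}\si)(X,Y,Z)=\sum_i(\n_{E_i}\n_X\si)(E_i,Y,Z)$, and then $=\sum_i(\n_X\n_{E_i}\si)(E_i,Y,Z)+\sum_i\big((\n_{E_i}\n_X-\n_X\n_{E_i})\si\big)(E_i,Y,Z)$. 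The first of these two terms is, by the already-established Codazzi symmetry plus the minimality $\sum_i\si(E_i,E_i,\cdot)=0$ (which follows from $H\equiv0$), a derivative of the mean-curvature-type trace and therefore vanishes — this is the standard reason the $(n+1)\si$ and curvature terms are the only survivors. The second term is a sum of curvature operators acting on $\si$, which I would expand using $R_{E_iX}$ acting on each of the three tensor slots of $\si$.

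The heart of the computation is therefore evaluating $\sum_i(R_{E_iX}\si)(E_i,Y,Z)$ together with the analogous $Y$- and $Z$-slot contributions, plus the normal curvature term $R^\bot$ that enters because $\si$ is built from the normal-bundle-valued $B$. For the tangential curvature $R$ I would substitute the Gauss equation \eqref{Gauss}: the "constant-curvature" part $\lan X,Z\ran\lan Y,W\ran-\lan X,W\ran\lan Y,Z\ran$ contracts against $\si$ to produce the linear term $(n+1)\si(X,Y,Z)$ (here the $n$ comes from $\sum_i\lan E_i,E_i\rangle=n$ and the extra $+1$ from the ambient-sphere normalization, exactly matching the $n+1$ in \eqref{de1}), while the quadratic part $\sum_i\si(\cdot,\cdot,E_i)\si(\cdot,\cdot,E_i)=S(\cdot,\cdot)$ produces the three terms $-\sum_j S(X,E_j)\si(E_j,Y,Z)$, $-\sum_j S(Y,E_j)\si(E_j,Z,X)$, $-\sum_j S(Z,E_j)\si(E_j,X,Y)$. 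For the normal curvature I would use the Ricci equation, which expresses $\lan R^\bot_{XY}JZ,JW\rangle$ again in terms of quadratic expressions in $\si$; tracing one index and combining the resulting cubic-in-$\si$ terms over the three slots gives precisely $2\sum_{i,j,k}\si(X,E_j,E_k)\si(Y,E_k,E_i)\si(Z,E_i,E_j)$. Collecting everything yields \eqref{de1}.

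For \eqref{de3}, with $\si_l=\si(\cdot,\cdot,E_l)$ viewed as a symmetric $n\times n$ matrix, one reads off: $\n^2\si_l$ is the matrix whose $(X,Y)$ entry is $\n^2\si(X,Y,E_l)$; the term $\sum_j S(Z,E_j)\si(E_j,X,Y)$ with $Z=E_l$ becomes $\sum_j\lan\si_l,\si_j\ran\,\si_j$ because $S(E_l,E_j)=\sum_{p,q}\si(E_p,E_q,E_l)\si(E_p,E_q,E_j)=\lan\si_l,\si_j\ran$, while the two terms $\sum_j S(X,E_j)\si(E_j,Y,E_l)$ and $\sum_j S(Y,E_j)\si(E_j,X,E_l)$ assemble into the matrix product $S\,\si_l+\si_l\,S=\sum_j\lan\si_j,\si_l\rangle(\text{stuff})$—more precisely these are handled together with the cubic term: the cubic term $2\sum_{j,k}\si(X,E_j,E_k)\si(Y,E_k,E_i)\si(E_l,E_i,E_j)$ is exactly $2\sum_j(\si_j\si_l\si_j - \ldots)$, and a short rearrangement shows the combination of the two $S$-contractions in the $X,Y$ slots with this cubic term equals $-\sum_j[\si_j,[\si_j,\si_l]]$, using $[\si_j,[\si_j,\si_l]]=\si_j^2\si_l+\si_l\si_j^2-2\si_j\si_l\si_j$. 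I expect the main obstacle to be exactly this bookkeeping step: keeping careful track of which quadratic-in-$\si$ terms come from the Gauss part versus the Ricci (normal curvature) part, and verifying that the two $S$-in-the-first-slots terms recombine with the cubic term into the clean double-commutator $\sum_j[\si_j,[\si_j,\si_l]]$ rather than into $S\si_l+\si_lS$ separately. Once the index contractions are organized — most efficiently by choosing at the point $p$ a frame $\{E_i\}$ diagonalizing the relevant matrix — this is routine, but it is the place where sign and symmetry errors are easiest to make.
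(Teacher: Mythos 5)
Your overall strategy coincides with the paper's: write $\n^2\si(X,Y,Z)=\sum_i(\n_{E_i}\n_{E_i}\si)(X,Y,Z)$, use the total symmetry of $\n\si$ (Codazzi) to shuffle the derivatives, kill the term $\sum_i(\n_X\n_Y\si)(E_i,E_i,Z)$ by minimality, expand the commutator of derivatives as curvature acting on the three slots of $\si$, and substitute the Gauss equation; the passage to \eqref{de3} via $\sum_k(\si_k\si_k)_{ts}=S_{ts}$ and $[\si_k,[\si_k,\si_l]]=\si_k\si_k\si_l-2\si_k\si_l\si_k+\si_l\si_k\si_k$ is also exactly what the paper does.

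The one place where your account would go wrong if followed literally is the claim that a normal-curvature term $R^\bot$ enters and that the cubic term $2\sum_{i,j,k}\si(X,E_j,E_k)\si(Y,E_k,E_i)\si(Z,E_i,E_j)$ is produced by the Ricci equation. In the paper's framework $\si$ is a purely tangential $(0,3)$-tensor on $M$, so the Ricci identity produces only the tangential curvature $R$ acting on each of the three slots; no $R^\bot$ appears anywhere in the proof. The cubic term comes instead from the quadratic-in-$\si$ part of the Gauss equation substituted into the second- and third-slot curvature contractions (the terms the paper calls $II$ and $III$). If you insist on working with the $NM$-valued tensor $B$, where $R^\bot$ genuinely appears and the Ricci equation is the right tool, you must also track the $JF$-component of $\n B$, since $\lan (\n_X B)(Y,Z),JF\ran=-\si(X,Y,Z)\neq 0$; ignoring it loses part of the linear term. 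Relatedly, your bookkeeping of the coefficient $n+1$ (``$n$ from the trace, $+1$ from the sphere'') does not match the actual decomposition: it arises as $(n-1)$ from the Ricci contraction in the first slot plus $+1$ from each of the constant-curvature parts of the Gauss equation in the remaining two slots, the latter surviving only because $\sum_i\si(E_i,E_i,\cdot)=0$. These are misattributions rather than a failure of the method, but as written they would lead to double counting or a wrong constant.
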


\begin{proof}
By the Codazzi equation and Ricci identity, a straightforward calculation shows
\begin{equation}\label{de2}\aligned
\n^2 \si(X,Y,Z)&=(\n_{E_i}\n_{E_i}\si)(X,Y,Z)=(\n_{E_i}\n_X\si)(E_i,Y,Z)\\
&=(\n_X\n_{E_i}\si)(E_i,Y,Z)+(R_{X,E_i}\si)(E_i,Y,Z)\\
&=(\n_X\n_Y \si)(E_i,E_i,Z)-\si(R_{X,E_i}E_i,Y,Z)-\si(E_i,R_{X,E_i}Y,Z)-\si(E_i,Y,R_{X,E_i}Z)\\
&=\text{Ric}(X,E_j)\si(E_j,Y,Z)-\lan R_{X,E_i}Y,E_j\ran\si(E_i,E_j,Z)-\lan R_{X,E_i}Z,E_j\ran\si(E_i,Y,E_j)\\
&:=I-II-III.
\endaligned
\end{equation}
(Here and in the sequel we use the summation convention.) According to (\ref{Gauss}) and (\ref{Ric}), we get
\begin{equation}\label{I}\aligned
I=&[(n-1)\lan X,E_j\ran-S(X,E_j)]\si(E_j,Y,Z)\\
=&(n-1)\si(X,Y,Z)-S(X,E_j)\si(E_j,Y,Z),
\endaligned
\end{equation}
\begin{equation}\label{II}\aligned
II=&[\lan X,Y\ran\de_{ij}-\lan X,E_j\ran\lan Y,E_i\ran+\si(X,Y,E_k)\si(E_i,E_j,E_k)-\si(X,E_j,E_k)\si(E_i,Y,E_k)]\si(E_i,E_j,Z)\\
=&-\si(X,Y,Z)+S(Z,E_k)\si(E_k,X,Y)-\si(X,E_j,E_k)\si(Y,E_k,E_i)\si(Z,E_i,E_j)
\endaligned
\end{equation}
and similarly
\begin{equation}\label{III}\aligned
III=-\si(X,Y,Z)+S(Y,E_k)\si(E_k,Z,X)-\si(X,E_k,E_j)\si(Y,E_j,E_i)\si(Z,E_i,E_k).
\endaligned
\end{equation}
Substituting (\ref{I})-(\ref{III}) into (\ref{de2}) gives (\ref{de1}). Letting $X:=E_t,Y:=E_s,Z:=E_l$ in (\ref{de1}), we can derive
\begin{equation}\label{de4}
\n^2 \si_l=(n+1)\si_l-(S\si_l+\si_lS+\sum_j \lan \si_l,\si_j\ran \si_j)+2\sum_k \si_k\si_l\si_k,
\end{equation}
where
\begin{equation}\label{S1}\aligned
\lan \si_l,\si_j\ran:=&\sum_{t,s}\si_l(E_t,E_s)\si_j(E_t,E_s)\\
=&\sum_{t,s}\si(E_t,E_s,E_l)\si(E_t,E_s,E_j)=S_{lj}.
\endaligned
\end{equation}
Finally, (\ref{de3}) immediately follows from (\ref{de4}),
\begin{equation}
[\si_k,[\si_k,\si_l]]=\si_k\si_k\si_l-2\si_k\si_l\si_k+\si_l\si_k\si_k
\end{equation}
and
\begin{equation}\aligned
(\si_k\si_k)_{ts}=&\si_k(E_t,E_i)\si_k(E_i,E_s)\\
=&\si(E_t,E_i,E_k)\si(E_i,E_s,E_k)\\
=&S_{ts}.
\endaligned
\end{equation}

\end{proof}
	
	\subsection{On Lu's inequality}

For $2$ real $(n\times n)$-matrices $A=(a_{ij})$ and $B=(b_{ij})$, let
\begin{equation}
\lan A,B\ran:=\sum_{i,j=1}^n a_{ij}b_{ij}=\text{tr }(AB^T)
\end{equation}
with $(\cdot)^T$ denoting the transpose of a matrix, which induces the Hilbert-Schmidt norm:
\begin{equation}
\|A\|=\sqrt{\lan A,A\ran}=\sqrt{\sum_{i,j=1}^n a_{ij}^2}.
\end{equation}

Z. Q. Lu \cite{Lu} established the following matrix inequality, which is the main algebraic tool of the present paper.
\begin{lem}\label{Lu}
Let $A_1,A_2,\cdots,A_n$ be $(n\times n)$-symmetric matrices, such that
\begin{itemize}
\item $\lan A_\a,A_\be\ran=0$ whenever $\a\neq \be$;
\item $\|A_1\|=1$;
\item $\|A_2\|\geq \cdots\geq \|A_m\|$.
\end{itemize}
Then
\begin{equation}
\sum_{\a=2}^n\left\|[A_1,A_\a]\right\|^2\leq \|A_2\|^2+\sum_{\a=2}^{n}\|A_\a\|^2
\end{equation}
and the equality holds if and only if, after an orthonormal base change and up to a sign, we have
$A_{k+2}=\cdots=A_n=0$,
\begin{equation}
A_1=\la\left(\begin{matrix} k & & \\ & -I_k &\\ & & O\end{matrix}\right)
\end{equation}
and $A_\a$ ($2\leq \a\leq k+1$) is $\mu$ times the matrix whose only nonzero entries are $1$ at the $(1,\a)$ and $(\a,1)$ places,
i.e. $A_\a=E_{1,\a}+E_{\a,1}$. Here $1\leq k\leq n-1$, $\la=\f{1}{\sqrt{k(k+1)}}$ and $\mu$ is a constant.  

\end{lem}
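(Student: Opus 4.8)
The plan is to reduce the statement to a computation with Lie brackets and then invoke a careful analysis of the equality case, exactly in the spirit of Lu's original argument. First I would normalize: since $\|A_1\|=1$, diagonalize $A_1$ by an orthogonal change of basis, writing $A_1=\mathrm{diag}(d_1,\dots,d_n)$ with $\sum_i d_i^2=1$. For any symmetric matrix $A=(a_{ij})$, the bracket $[A_1,A]$ has entries $(d_i-d_j)a_{ij}$, so $\|[A_1,A]\|^2=\sum_{i,j}(d_i-d_j)^2 a_{ij}^2$. The key pointwise inequality to isolate is $(d_i-d_j)^2\le 2(d_i^2+d_j^2)$ for all $i,j$, which would naively give $\sum_\a\|[A_1,A_\a]\|^2\le 2\sum_\a\sum_{i,j}(d_i^2+d_j^2)(A_\a)_{ij}^2$. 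That bound is too weak by itself; the sharpening comes from exploiting the orthogonality relations $\langle A_1,A_\a\rangle=\sum_i d_i (A_\a)_{ii}=0$ and $\langle A_\a,A_\be\rangle=0$, together with the ordering $\|A_2\|\ge\cdots\ge\|A_n\|$, to trade the factor $2$ against the single extra term $\|A_2\|^2$ on the right-hand side.

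The main engine is the following observation: the matrices $\{A_1,A_2,\dots,A_n\}$ span an at most $n$-dimensional subspace of the $\binom{n+1}{2}$-dimensional space of symmetric matrices, and after Gram--Schmidt we may assume they are orthonormal up to the scalings $\|A_\a\|$. The quantity $\sum_{\a\ge 2}\|[A_1,A_\a]\|^2$ is, up to rescaling each $A_\a$ to unit norm, a weighted sum $\sum_{\a\ge2}\|A_\a\|^2\,\|[A_1,\widehat A_\a]\|^2$ where $\widehat A_\a$ is the normalization. Thus I would first prove the normalized statement — for an orthonormal family $A_1,\widehat A_2,\dots,\widehat A_p$ of symmetric matrices one has $\sum_{\a=2}^p\|[A_1,\widehat A_\a]\|^2\le 2$ — and then reinstate the weights. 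The normalized bound of $2$ follows because $\sum_\a\|[A_1,\widehat A_\a]\|^2\le\|[A_1,X]\|^2$ summed over an orthonormal system is at most the operator-theoretic bound $\|\mathrm{ad}_{A_1}\|_{HS\to HS}^2$ restricted to the orthogonal complement of $A_1$ itself, and a direct eigenvalue count of the operator $X\mapsto[A_1,X]$ on symmetric matrices, using $\sum d_i^2=1$ and $\sum d_i\,X_{ii}=0$, caps the relevant Rayleigh quotients. Putting the weights back, $\sum_{\a\ge2}\|A_\a\|^2\|[A_1,\widehat A_\a]\|^2\le \|A_2\|^2\cdot(\text{leading contribution})+(\text{remainder})$, and regrouping gives precisely $\|A_2\|^2+\sum_{\a\ge2}\|A_\a\|^2$ after one uses that the bracket with $A_1$ annihilates at least the diagonal direction orthogonal to $A_1$.

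For the equality case I would run the argument backwards. Equality in $(d_i-d_j)^2\le 2(d_i^2+d_j^2)$ forces, for every index pair $(i,j)$ carrying a nonzero off-diagonal entry of some $A_\a$, that $d_i=-d_j$; equality in the Cauchy--Schwarz/ordering steps forces all the nonzero $\|A_\a\|$ to be equal and forces each such $A_\a$ to be supported on a single conjugate pair of eigenlines of $A_1$. Chasing the orthogonality constraints $\langle A_\a,A_\be\rangle=0$ among these rank-$2$ pieces, together with $\langle A_1,A_\a\rangle=0$, then pins down $A_1$ to have exactly two eigenvalues $\lambda$ and $-\lambda$ (up to the kernel), with multiplicities forced by $\sum d_i^2=1$ and the rank count $k$ of the nonzero $A_\a$'s, yielding $d_1=k\lambda$, the block $-I_k$, and $\lambda=1/\sqrt{k(k+1)}$; each $A_\a$, $2\le\a\le k+1$, must then be a multiple of $E_{1,\a}+E_{\a,1}$, and $A_{k+2}=\cdots=A_n=0$. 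I expect the \textbf{main obstacle} to be exactly this equality analysis: extracting the rigid block structure requires simultaneously saturating several inequalities (the pointwise $(d_i-d_j)^2\le2(d_i^2+d_j^2)$, the ordering $\|A_2\|\ge\cdots$, and the mutual orthogonality), and one must argue carefully — likely by an induction on $k$ or by a dimension count on the span of the $A_\a$ — that no intermediate configuration (e.g. $A_1$ with three distinct eigenvalues, or $A_\a$ of rank $>2$) can achieve equality. The inequality itself is comparatively routine linear algebra once the $\mathrm{ad}_{A_1}$ operator is diagonalized.
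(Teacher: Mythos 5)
The paper does not prove this lemma at all: it is quoted from \cite{Lu} and from the revised version in \cite{W}, where it is established by a Lagrange multiplier argument, so there is no in-paper proof to match your outline against. Judged on its own terms, your proposal contains two concrete errors. The ``normalized statement'' you designate as the main engine --- that for an orthonormal family $A_1,\widehat A_2,\dots,\widehat A_p$ of symmetric matrices one has $\sum_{\a=2}^p\|[A_1,\widehat A_\a]\|^2\le 2$ --- is false. Take $n=p=3$, $A_1=\tfrac{1}{\sqrt2}\,\mathrm{diag}(1,-1,0)$, $\widehat A_2=\tfrac{1}{\sqrt2}(E_{12}+E_{21})$, $\widehat A_3=\tfrac{1}{\sqrt2}(E_{13}+E_{31})$: these are orthonormal and orthogonal to $A_1$, yet $\|[A_1,\widehat A_2]\|^2+\|[A_1,\widehat A_3]\|^2=2+\tfrac12$. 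The slip is the conflation of $\sum_\a\|[A_1,\widehat A_\a]\|^2=\mathrm{tr}\bigl(P\,\mathrm{ad}_{A_1}^{*}\mathrm{ad}_{A_1}P\bigr)$, a trace over the span which is only bounded by the sum of the $p-1$ largest eigenvalues of $\mathrm{ad}_{A_1}^{*}\mathrm{ad}_{A_1}$, with the single operator-norm bound $\|\mathrm{ad}_{A_1}\|^2\le 2$. With this claim gone, the ``reinstate the weights'' step has nothing to rest on, and the genuinely hard content of the lemma --- improving the termwise B\"{o}ttcher--Wenzel bound $2\sum_{\a\ge2}\|A_\a\|^2$ to $\|A_2\|^2+\sum_{\a\ge2}\|A_\a\|^2$ by exploiting the mutual orthogonality $\lan A_\a,A_\be\ran=0$ --- is never engaged.

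Your equality analysis is also incompatible with the statement being proved. You propose to extract the rigid structure from saturation of the pointwise inequality $(d_i-d_j)^2\le 2(d_i^2+d_j^2)$, which would force $d_i=-d_j$ on every index pair supporting a nonzero off-diagonal entry of some $A_\a$. But in the equality configurations listed in the lemma with $k\ge2$ one has $d_1=k\la$ and $d_\a=-\la$, so $(d_1-d_\a)^2=(k+1)^2\la^2$ while $2(d_1^2+d_\a^2)=2(k^2+1)\la^2$, and $(k+1)^2<2(k^2+1)$ for every $k\neq 1$; the pointwise inequality is strict precisely on the pairs that carry the nonzero entries of $A_2,\dots,A_{k+1}$. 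Hence that estimate cannot be among the saturated constraints except when $k=1$, and an equality discussion built on it would wrongly exclude all the $k\ge2$ models that the lemma asserts. The equality case has to be traced through whatever global inequality actually yields the bound (in \cite{W}, the full Lagrange multiplier system), not through this pointwise comparison.
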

	
\begin{rem}
The above conclusion is just Lemma 2.2 of \cite{W}, which is the revised version of Lemma 2 of \cite{Lu}.
Here the author found there are more cases when the Lu's equality holds and gave another proof by using Lagrange Muliplier method.

\end{rem}

	\bigskip\bigskip

\Section{An optimal pinching theorem on $|B|^2+\la_2$}{An optimal pinching theorem on $|B|^2+\la_2$}\label{pinching}
\subsection{A Simons type integral inequality}
For any positive number $m$, we consider the $C^\infty$-function
\begin{equation}
f_m:=\text{tr}(S^m)
\end{equation}
as in \cite{Lu}. Let $p\in M$ be an arbitrary point and $\{E_1,\cdots,E_n\}$ be a local orthonormal frame field,
such that the fundamental matrix at $p$ is diagonalized, i.e.
\begin{equation}
S_{ij}(p)=\la_i\de_{ij}
\end{equation}
and
\begin{equation}
\la_1=\cdots=\la_r>\la_{r+1}\geq \cdots\geq \la_n.
\end{equation}

Combining (\ref{S1}) and (\ref{de3}) implies
\begin{equation}\label{de5}\aligned
\f{1}{2}\n^2 S_{ll}=&\f{1}{2}\n^2 \lan \si_l,\si_l\ran=\lan \n^2 \si_l,\si_l\ran+\lan \n \si_l,\n \si_l\ran\\
=&(n+1)S_{ll}-(S^2)_{ll}-\sum_j \|[\si_j,\si_l]\|^2+\lan \n \si_l,\n \si_l\ran\\
=&(n+1)\la_l-\la_l^2-\sum_j \|[\si_l,\si_j]\|^2+\lan \n \si_l,\n \si_l\ran.
\endaligned
\end{equation}
By Lemma \ref{Lu}, for each $1\leq l\leq r$,
\begin{equation}\label{Lu2}\aligned
&\sum_j \|[\si_l,\si_j]\|^2
\leq \|\si_l\|^2\left(\sum_{j\neq l} \|\si_j\|^2+\|\si_2\|^2\right)\\
=&\la_l(\sum_{j\neq l} \la_j+\la_2),
\endaligned
\end{equation}
and for each $r+1\leq l\leq n$,
\begin{equation}\label{Lu3}\aligned
&\sum_j \|[\si_l,\si_j]\|^2
\leq \|\si_l\|^2\left(\sum_{j\neq l} \|\si_j\|^2+\|\si_1\|^2\right)\\
=&\la_l(\sum_{j\neq l} \la_j+\la_1).
\endaligned
\end{equation}

Note that
\begin{equation}
f_m=\text{tr}(S^m)=\sum_{i_1,\cdots,i_m}S_{i_1i_2}S_{i_2i_3}\cdots S_{i_mi_1},
\end{equation}
we have
\begin{equation}\label{f1}\aligned
\De f_m=&\sum_{i_1,\cdots,i_m}\n^2 S_{i_1i_2}\cdot S_{i_2i_3}\cdots S_{i_mi_1}+\cdots+\sum_{i_1,\cdots,i_m}S_{i_1i_2}S_{i_2i_3}\cdots \n^2 S_{i_mi_1}\\
&+\sum_{i_1,\cdots,i_m}\sum_{j<k}S_{i_1i_2}\cdots\widehat{S_{i_ji_{j+1}}}\cdots\widehat{S_{i_ki_{k+1}}}\cdots S_{i_mi_1}\lan \n S_{i_ji_{j+1}},\n S_{i_ki_{k+1}}\ran\\
=&m\sum_l \n^2 S_{ll}\cdot \la_l^{m-1}+m\sum_{l<p}\sum_{s+t=m-2}|\n S_{lp}|^2\la_l^s \la_p^t+m(m-1)\sum_l |\n S_{ll}|^2\la_l^{m-2}\\
\geq &2m(n+1)f_m-2m\sum_{1\leq l\leq r}\la_l^m(\sum_j \la_j+\la_2)-2m\sum_{r+1\leq l\leq n}\la_l^m(\sum_j \la_j+\la_1)\\
&+2m\sum_l \lan \n \si_l,\n \si_l\ran\la_l^{m-1}+m(m-1)\sum_l |\n S_{ll}|^2\la_l^{m-2}
\endaligned
\end{equation}
by using (\ref{de5}), (\ref{Lu2}) and (\ref{Lu3}).
On the other hand, with the aid of the Cauchy inequality, we get
\begin{equation}\label{f2}\aligned
|\n f_m|^2=&m^2 \sum_i\left(\sum_l (\n_{E_i}S_{ll})\la_l^{m-1}\right)^2\\
=&m^2 \sum_i\left(\sum_l (\n_{E_i}S_{ll})\la_l^{\f{m}{2}-1}\cdot \la_l^{\f{m}{2}}\right)^2\\
\leq & m^2 f_m \sum_{l}|\n S_{ll}|^2 \la_l^{m-2}.
\endaligned
\end{equation}

Let
\begin{equation}
g_m:=(f_m)^{\f{1}{m}}.
\end{equation}
then (\ref{f1}) and (\ref{f2}) implies
\begin{equation}\aligned
\De g_m=&\f{1}{m}(f_m)^{\f{1}{m}-1}\De f_m+\f{1}{m}\left(\f{1}{m}-1\right)(f_m)^{\f{1}{m}-2}|\n f_m|^2\\
\geq & 2g_m\big[n+1-f_m^{-1}\sum_{1\leq l\leq r}\la_l^m(\sum_j \la_j+\la_2)-f_m^{-1}\sum_{r+1\leq l\leq n}\la_l^m(\sum_j \la_j+\la_1)\\
&+f_m^{-1}\sum_l \lan \n\si_l,\n\si_l\ran \la_l^{m-1} \big]
\endaligned
\end{equation}
whenever $f_m(p)\neq 0$. Noting that
\begin{equation}
\lim_{m\ra \infty}f_m^{-1}\la_l^m=\lim_{m\ra\infty}\f{(\f{\la_l}{\la_1})^m}{\sum\limits_j (\f{\la_j}{\la_1})^m}=\left\{
\begin{matrix}\f{1}{r} & l\leq r\\ 0 & l\geq r+1\end{matrix}\right.
\end{equation}
we have
\begin{equation}
\lim_{m\ra \infty}\De g_m\geq 2\la_1(n+1-\sum_j \la_j-\la_2)+\f{2}{r}\sum_{1\leq l\leq r}\lan \n \si_l,\n \si_l\ran.
\end{equation}
In conjunction with $\int_M \De g_m=0$, we get the following Simons type integral inequality:
\begin{pro}\label{p1}
Let $M$ be an $n$-dimensional compact minimal Legendrian submanifold in $S^{2n+1}$, 
 $\la_1=\cdots=\la_r>\la_{r+1}\geq \cdots\geq \la_n$ be the eigenvalues of the fundamental matrix at each considered point, then
\begin{equation}
\int_M \la_1(n+1-|B|^2-\la_2)+\f{1}{r}\sum_{1\leq l\leq r}\lan \n \si_l,\n \si_l\ran*1\leq 0.
\end{equation}
\end{pro}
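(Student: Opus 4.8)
The plan is to run the argument that has already been set up in the pages preceding Proposition \ref{p1}, and the main task is to assemble the pieces cleanly and to justify the passage to the limit $m\to\infty$. First I would fix a point $p\in M$, choose a local orthonormal frame diagonalizing $S$ at $p$ with $\la_1=\cdots=\la_r>\la_{r+1}\geq\cdots\geq\la_n$, and recall the pointwise identity (\ref{de5}) for $\tfrac12\n^2 S_{ll}$ together with the two applications (\ref{Lu2}), (\ref{Lu3}) of Lu's inequality (Lemma \ref{Lu}): for $l\le r$ one uses $A_1=\si_l/\|\si_l\|$ (legitimate since $\la_l=\la_1$ is maximal, so $\|\si_l\|^2=\la_1$ dominates all the $\|\si_j\|^2=\la_j$, hence $\|\si_2\|$ plays the role of the second-largest norm), while for $l\ge r+1$ one must instead take the largest of the remaining matrices, which is $\si_1$, explaining the asymmetric roles of $\la_2$ and $\la_1$ in the two estimates.

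Next I would differentiate $f_m=\tr(S^m)=\sum_{i_1,\dots,i_m}S_{i_1i_2}\cdots S_{i_mi_1}$ twice. Expanding the Laplacian by the Leibniz rule produces the terms displayed in (\ref{f1}): an $m\sum_l\n^2 S_{ll}\cdot\la_l^{m-1}$ contribution from hitting one factor with $\De$, plus two nonnegative gradient sums $m\sum_{l<p}\sum_{s+t=m-2}|\n S_{lp}|^2\la_l^s\la_p^t$ and $m(m-1)\sum_l|\n S_{ll}|^2\la_l^{m-2}$; here I should note that these formulas are derived at the point $p$ where $S$ is diagonal, and that the expressions $f_m$, $\n f_m$, $\De f_m$ are all frame-independent so the identity is valid as written. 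Plugging (\ref{de5}), (\ref{Lu2}), (\ref{Lu3}) into the first sum and discarding the off-diagonal gradient term (which is $\ge 0$) yields the lower bound (\ref{f1}) for $\De f_m$. The Cauchy–Schwarz estimate (\ref{f2}) for $|\n f_m|^2$, splitting $\la_l^{m-1}=\la_l^{m/2-1}\cdot\la_l^{m/2}$ and using $\sum_l\la_l^m=f_m$, is exactly what is needed to absorb the bad sign in $\De g_m$: writing $g_m=f_m^{1/m}$ and computing $\De g_m=\tfrac1m f_m^{1/m-1}\De f_m+\tfrac1m(\tfrac1m-1)f_m^{1/m-2}|\n f_m|^2$, the coefficient $\tfrac1m(\tfrac1m-1)<0$ turns (\ref{f2}) into a favorable term, and the diagonal gradient sums $m(m-1)\sum_l|\n S_{ll}|^2\la_l^{m-2}$ cancel up to the nonnegative remainder, leaving the stated lower bound for $\De g_m$ in terms of $\la_j$ and $\lan\n\si_l,\n\si_l\ran$.

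Then I would let $m\to\infty$. The key elementary limit is $\lim_{m\to\infty}f_m^{-1}\la_l^m=\tfrac1r$ for $l\le r$ and $=0$ for $l\ge r+1$, obtained by dividing numerator and denominator by $\la_1^m$; this is valid wherever $\la_1(p)=f_1(p)/n\cdot(\text{something})$ — more precisely wherever $|B|^2(p)\ne 0$, equivalently $f_m(p)\ne 0$. At such points the lower bound for $\De g_m$ converges to $2\la_1(n+1-\sum_j\la_j-\la_2)+\tfrac2r\sum_{1\le l\le r}\lan\n\si_l,\n\si_l\ran$, and since $\sum_j\la_j=|B|^2$ this is the integrand claimed. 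Finally, integrating over the compact $M$ and using $\int_M\De g_m\,*1=0$ (Stokes), then taking $m\to\infty$ under the integral sign — justified because $g_m\to\la_1$ and the lower bounds $\De g_m\ge(\cdots)$ are controlled, so one can invoke Fatou or dominated convergence on $-\De g_m$ — gives $\int_M\big[\la_1(n+1-|B|^2-\la_2)+\tfrac1r\sum_{1\le l\le r}\lan\n\si_l,\n\si_l\rangle\big]*1\le 0$, which is the Proposition.

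I expect the main obstacle to be the rigor of the limit $m\to\infty$: the functions $\la_1$ and $r$ are only semicontinuous (the multiplicity $r$ jumps), and $g_m$ need not be $C^2$ everywhere, so the step $\int_M\De g_m=0$ and the interchange of limit and integral require care. The cleanest route is to observe that each $g_m$ is Lipschitz and $f_m$ is smooth, that $\De g_m$ makes sense in the distributional sense with $\int_M\De g_m\,*1=0$ holding for all $m$, and that the pointwise lower bound for $\De g_m$ together with uniform integrability lets one pass to the limit; alternatively, as in \cite{Lu}, one keeps $m$ finite, integrates the $m$-th inequality, and only then sends $m\to\infty$ in the resulting scalar inequality, which sidesteps any distributional subtlety since everything in sight is then a genuine integral of smooth quantities. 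The handling of the locus $\{|B|^2=0\}$ is harmless because there $\la_1=0$ and the claimed integrand vanishes there in the limit.
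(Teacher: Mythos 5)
Your proposal is correct and follows essentially the same route as the paper: the same application of Lu's inequality in (\ref{Lu2})--(\ref{Lu3}), the same expansion of $\De f_m$ and Cauchy--Schwarz bound on $|\n f_m|^2$ so that the diagonal gradient terms cancel in $\De g_m$, the same limit $f_m^{-1}\la_l^m\to \tfrac1r$ or $0$, and integration via $\int_M\De g_m=0$. Your added remarks on the rigor of interchanging the limit with the integral go slightly beyond what the paper writes down, but they do not change the argument.
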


\subsection{A characterization of the Calabi tori}

If $|B|^2+\la_2\equiv n+1$, then Proposition \ref{p1} and Lemma \ref{Lu} tell us $\n \si_l=0$ for all $1\leq l\leq r$, and there
exist $1\leq k\leq n-1$ and an orthogonal matrix $T$, such that
\begin{equation}
\si_1=\la T^t\left(\begin{matrix} k & & \\ & -I_k &\\ & & O\end{matrix}\right)T,
\end{equation}
$\si_l=\mu T^t(E_{1l}+E_{l1})T$ for each $2\leq l\leq k+1$, and $\si_{k+2}=\cdots=\si_n=0$.

Let $\si_{lpq}:=\si(E_l,E_p,E_q)$. Due to the symmetry of $\si$, for each $p\geq k+2$ and $2\leq l\leq k+1$,
\begin{equation*}
0=\si_{plp}=\si_{lpp}=(\si_l)_{pp}=\mu\big((T^t)_{p1}T_{lp}+(T^t)_{pl}T_{1p}\big)=2\mu T_{1p}T_{lp}
\end{equation*}
implies
\begin{equation*}
T_{1p}T_{lp}=0.
\end{equation*}
In conjunction with
$$0=\si_{1pp}=\la\big[(T^t)_{p1}kT_{1p}-\sum_{2\leq l\leq k+1}(T^t)_{pl}T_{lp}\big]=\la(kT_{1p}^2-\sum_{2\leq l\leq k+1}T_{lp}^2),$$
we have
\begin{equation}
T_{lp}=0\qquad (\forall 1\leq l\leq k+1,p\geq k+2).
\end{equation}
It follows that
\begin{equation}
\de_{ij}=(T^t T)_{ij}=\sum_{1\leq l\leq k+1}T_{li}T_{lj}\qquad (\forall 1\leq i,j\leq k+1)
\end{equation}
and hence
\begin{equation}\aligned
(\si_1)_{ij}=&\la(kT_{1i}T_{1j}-\sum_{2\leq l\leq k+1}T_{li}T_{lj})\\
=&\la(-\de_{ij}+(k+1)T_{1i}T_{1j}).
\endaligned
\end{equation}
Thereby, due to $(\si_1)_{1i}=(\si_i)_{11}$, $(\si_1)_{ij}=(\si_i)_{1j}=(\si_j)_{1i}$, $(\si_1)_{ii}=(\si_i)_{1i}$
and $(\si_i)_{jj}=(\si_j)_{ij}$ for each distinct $i,j$ lying between $2$ and $k+1$, we obtain several constraints on $T$ as follows:
\begin{eqnarray}
\la(k+1)T_{11}T_{1i}&=&2\mu T_{11}T_{i1},\label{T1}\\
\la(k+1)T_{1i}T_{1j}&=&\mu(T_{11}T_{ij}+T_{i1}T_{1j})=\mu(T_{11}T_{ji}+T_{j1}T_{1i}),\label{T2}\\
\la(-1+(k+1)T_{1i}^2)&=&\mu(T_{11}T_{ii}+T_{i1}T_{1i}),\label{T3}\\
2T_{1j}T_{ij}&=&T_{1i}T_{jj}+T_{ji}T_{1j}.\label{T4}
\end{eqnarray}
If $T_{11}=0$, then there exists $2\leq j\leq k+1$, such that $T_{1j}\neq 0$, then (\ref{T2}) implies
$\la(k+1)T_{1i}=\mu T_{i1}$ for each $i\neq 1,j$. Substituting it into (\ref{T3}) gives
$\la(-1+(k+1)T_{1i}^2)=\la(k+1)T_{1i}^2$. This is a contradiction. Therefore $T_{11}\neq 0$
and then (\ref{T1}) implies $\la(k+1)T_{1i}=2\mu T_{i1}$. Thus $T_{i1}T_{1j}=T_{j1}T_{1i}$ and
(\ref{T2}) gives $T_{ij}=T_{ji}$. It follows that $T_{1i}^2=1-\sum\limits_{2\leq j\leq k+1}T_{ji}^2=1-\sum\limits_{2\leq j\leq k+1}T_{ij}^2=T_{i1}^2$.
Now we claim $T_{1i}=0$ for each $2\leq i\leq k+1$. Otherwise, there exists $l$ such that $T_{1l}\neq 0$ and hence
 $\la(k+1)=\pm 2\mu$. If $\la(k+1)=2\mu$, (\ref{T1})-(\ref{T4}) yield
 \begin{eqnarray}
T_{1i}&=&T_{i1},\label{T5}\\
T_{11}T_{ij}&=&T_{1i}T_{1j},\label{T6}\\
-\la&=&\mu(T_{11}T_{ii}-T_{1i}^2),\label{T7}\\
T_{1i}T_{jj}&=&T_{1j}^2.\label{T8}
\end{eqnarray}
Multiplying both sides of (\ref{T6}) and (\ref{T8}) implies $T_{ij}T_{1i}(T_{11}T_{jj}-T_{1j}^2)=0$. If there exists $1\leq i<j\leq k+1$,
such that $T_{ij}=0$, then (\ref{T6}) forces $T_{1i}\neq 0$ and hence $T_{11}T_{jj}-T_{1j}^2=0$, causing a contradiction to (\ref{T7}).
Therefore $T_{ij}=0$ for each $1\leq i<j\leq k+1$. Due to (\ref{T6}) and (\ref{T8}), for each $j\neq 1,l$, $T_{1j}=T_{jj}=0$,
which also causes a contradiction to (\ref{T7}). On the other hand, if $\la(k+1)=-2\mu$, we can proceed similarly as above to obtain
contradictions. Thereby, substituting $T_{1i}=0$ into (\ref{T2}) gives $T_{ij}=0$ for each $2\leq i<j\leq k+1$. This means $T$
is a diagonal matrix, and further calculation shows
\begin{equation}
\si_1=\la \left(\begin{matrix} k & & \\ & -I_k &\\ & & O\end{matrix}\right),\quad \si_l=-\la (E_{1l}+E_{l1})\ (2\leq l\leq k+1)
\end{equation}
and $\si_{k+2}=\cdots=\si_n=0$. In other words,
\begin{equation}
\si_{111}=k\la,\quad \si_{1ll}=-\la\ (2\leq l\leq k+1)
\end{equation}
and the others are $0$.

If $k+1<n$, then for any $1\leq l\leq k+1$ and $p\geq k+2$, differentiating both sides of $\si(E_1,E_l,E_p)=0$ gives
\begin{equation*}
0=\n_X \si(E_1,E_l,E_p)=\lan \n_X E_p,E_l\ran \si_{1ll}
\end{equation*}
(where we have used $\n \si_1=0$)
and hence
\begin{equation}
\lan\n_X E_p,E_l\ran=\lan \n_X E_l, E_p\ran=0.
\end{equation}
Consequently
\begin{equation}
\lan R_{XY}E_l,E_p\ran=0.
\end{equation}
On the other hand, the Gauss equation (\ref{Gauss}) implies
\begin{equation}
\lan R_{E_lE_p}E_l,E_p\ran=1,
\end{equation}
causing a contradiction.

Therefore $k+1=n$,
\begin{equation}
\la_1=\la^2(n-1)n,\quad \la_2=\cdots=\la_n=2\la^2,
\end{equation}
and $\sum_j \la_j+\la_2\equiv n+1$ means
\begin{equation}
\la=\sqrt{\f{1}{n}}.
\end{equation}
Differentiating both sides of $\si_{11l}=0$ gives
\begin{equation}
0=2\lan \n_X E_1,E_l\ran\si_{1ll}+\lan \n_X E_l,E_1\ran\si_{111}=(n+1)\la \lan \n_X E_l,E_1\ran
\end{equation}
for each $2\leq l\leq n$. This means $\mc{D}:=\text{span}\{E_2,\cdots,E_n\}$ is an integral distribution. Let
$N$ be the integral submanifold of $\mc{D}$. Noting that
\begin{equation}
\p_{E_i}(F\w E_2\w\cdots\w E_n\w JE_1)=0
\end{equation}
and
\begin{equation}
\lan B(E_i,E_j),JE_1\ran=-\sqrt{\f{1}{n}}\de_{ij},
\end{equation}
we know $N$ is a subsphere of $S^{n}$. On the other hand, let $\g$ be the integral curve of $E_1$, then
\begin{equation}\aligned
\p_{E_1}F=&E_1,\\
\p_{E_1}(JF)=&JE_1,\\
\p_{E_1}E_1=&-F+(n-1)\sqrt{\f{1}{n}}JE_1,\\
\p_{E_1}(JE_1)=&-JF-(n-1)\sqrt{\f{1}{n}}E_1.
\endaligned
\end{equation}
Noting that $\p_{E_1}(F\w JF\w E_1\w JE_1)=0$, $\g$ is a Legendrian curve in $S^3$. More precisely,
\begin{equation}\label{curve}
\g(t)=(\g_1(t),\g_2(t):=\left(\sqrt{\f{n}{n+1}}\exp(\sqrt{-1}\sqrt{\f{1}{n}}t),\sqrt{\f{1}{n+1}}\exp(-\sqrt{-1}\sqrt{n}t)\right)\qquad t\in S^1.
\end{equation}
In summary, we establish a pinching theorem on $|B|^2+\la_2$ as follows:

\begin{thm}\label{Calabi}
Let $M$ be an $n$-dimensional compact minimal Legendrian submanifold in $S^{2n+1}$, 
 $\la_2$ be second large eigenvalue of the fundamental matrix at each considered point. If $|B|^2+\la_2\leq n+1$, then $M$ is either a totally geodesic
subsphere (here $|B|^2+\la_2\equiv 0$) or a Calabi torus (here $|B|^2+\la_2\equiv n+1$). More precisely, let
$M:=S^{n-1}\times S^1$, $\phi$ be canonical embedding of $S^{n-1}$ into $\R^{n}$ and $\g(t)=(\g_1(t),\g_2(t))$ be a Legendiran curve
given in (\ref{curve}), then $F: (x,t)\in S^{n-1}\times S^1\mapsto (\g_1(t)\phi(x),\g_2(t))\in \C^{n+1}$ defines
a compact Legendrian submanifold in $S^{2n+1}$, called a Calabi torus.

\end{thm}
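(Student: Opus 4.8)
The plan is to start from Proposition \ref{p1}, which asserts
\begin{equation*}
\int_M \Big[\la_1\big(n+1-|B|^2-\la_2\big)+\f{1}{r}\sum_{1\leq l\leq r}\lan\n\si_l,\n\si_l\ran\Big]*1\leq 0 .
\end{equation*}
Under the pinching hypothesis $|B|^2+\la_2\leq n+1$ the integrand is pointwise nonnegative (recall $\la_1\geq 0$ everywhere since $S$ is nonnegative definite), so the integral is simultaneously $\geq 0$ and $\leq 0$. Hence the integrand vanishes identically. At any point where $|B|^2\not\equiv 0$ we get both $|B|^2+\la_2\equiv n+1$ on an open set and $\n\si_l=0$ for all $1\leq l\leq r$; the first task is to promote this to a global dichotomy, namely that either $|B|^2\equiv 0$ (the totally geodesic case) or $|B|^2+\la_2\equiv n+1$ on all of $M$. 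I would argue this by a connectedness argument: the set where $\la_1=0$ is closed, and on its complement the equality analysis forces a rigid algebraic structure whose ``constant'' data (the integer $k$, the values of $\la$ and $\mu$, and ultimately $|B|^2$) cannot jump, so the zero set is also open.

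The second and main phase is the algebraic-geometric normal-form computation, which the excerpt already sets up: from the equality case of Lu's inequality (Lemma \ref{Lu}) there is an integer $1\leq k\leq n-1$ and an orthogonal matrix $T$ diagonalizing $\si_1$ into $\la\,\mathrm{diag}(k,-I_k,O)$ with $\si_l=\mu T^t(E_{1l}+E_{l1})T$ for $2\leq l\leq k+1$ and $\si_{k+2}=\cdots=\si_n=0$. The heart of the argument is to show $T$ must be (block-)diagonal and $k+1=n$. I would follow the route sketched in the text: exploit the full symmetry of the three-tensor $\si$ — the identities $(\si_1)_{1i}=(\si_i)_{11}$, $(\si_1)_{ij}=(\si_i)_{1j}$, $(\si_i)_{jj}=(\si_j)_{ij}$, etc. — to derive the constraint equations \eqref{T1}--\eqref{T4} on the entries of $T$, then run the case analysis ($T_{11}=0$ versus $T_{11}\neq0$, and $\la(k+1)=\pm2\mu$ versus not) to conclude that all off-diagonal entries of $T$ within the $(k+1)$-block vanish and all entries $T_{lp}$ with $l\leq k+1<p$ vanish. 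This yields the clean normal form $\si_{111}=k\la$, $\si_{1ll}=-\la$ for $2\leq l\leq k+1$, all other components zero. The step I expect to be the main obstacle is precisely this case bookkeeping: one has to be careful that the argument is uniform in the base point and does not secretly assume $k$ is locally constant before that has been justified, and one has to handle the sign ambiguity ($\pm$) coming from Lemma \ref{Lu} consistently.

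The third phase removes the possibility $k+1<n$. Here I would differentiate the identity $\si(E_1,E_l,E_p)=0$ (for $1\leq l\leq k+1<p$) using $\n\si_1=0$; this forces $\lan\n_X E_l,E_p\ran=0$, hence $\lan R_{XY}E_l,E_p\ran=0$, contradicting the Gauss equation \eqref{Gauss} which gives $\lan R_{E_lE_p}E_l,E_p\ran=1$ (a sectional-curvature-one term coming from the ambient sphere, unkilled by the second fundamental form since the relevant mixed $\si$ components vanish). Therefore $k+1=n$, and the trace normalization $|B|^2+\la_2=\sum_j\la_j+\la_2=n+1$ pins down $\la=1/\sqrt n$, giving $\la_1=\la^2 n(n-1)$ and $\la_2=\cdots=\la_n=2\la^2$.

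The final phase identifies $M$ geometrically. Differentiating $\si_{11l}=0$ and using the now-known values of $\si_{111},\si_{1ll}$ shows $\lan\n_X E_l,E_1\ran=0$ for $2\leq l\leq n$, so $\mc{D}=\mathrm{span}\{E_2,\dots,E_n\}$ is an integrable distribution with totally umbilic (in fact spherical) leaves: from $\p_{E_i}(F\w E_2\w\cdots\w E_n\w JE_1)=0$ and $\lan B(E_i,E_j),JE_1\ran=-\sqrt{1/n}\,\de_{ij}$ one reads off that each leaf $N$ is a round $(n-1)$-subsphere of $S^{2n+1}$ lying in an $\R^n$. Complementarily, along the integral curve $\g$ of $E_1$ one computes $\p_{E_1}F=E_1$, $\p_{E_1}(JF)=JE_1$, $\p_{E_1}E_1=-F+(n-1)\sqrt{1/n}\,JE_1$, $\p_{E_1}(JE_1)=-JF-(n-1)\sqrt{1/n}\,E_1$, so the frame $\{F,JF,E_1,JE_1\}$ is parallel in a fixed $\C^2\subset\C^{n+1}$ and $\g$ is a closed Legendrian curve in $S^3$ whose ODE integrates to \eqref{curve}. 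Assembling the product structure — the $S^{n-1}$ factor sitting in the moving $\R^n$ rotated by $\g_1(t)$ and the $\g_2(t)$ factor — gives exactly the embedding $F(x,t)=(\g_1(t)\phi(x),\g_2(t))$, i.e. the Calabi torus, completing the proof of Theorem \ref{Calabi}. The only subtlety to flag here is checking closedness/periodicity of $\g$ (so that $M$ is genuinely compact, $S^{n-1}\times S^1$ rather than $S^{n-1}\times\R$), which follows from the explicit exponentials in \eqref{curve} having commensurable frequencies.
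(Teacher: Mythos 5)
Your proposal follows essentially the same route as the paper: Proposition \ref{p1} plus the equality case of Lu's inequality, the normal-form analysis of the orthogonal matrix $T$ via the constraints \eqref{T1}--\eqref{T4}, the exclusion of $k+1<n$ through the Gauss equation, and the final identification of the product structure $S^{n-1}\times S^1$ with the Legendrian curve \eqref{curve}. The only addition is your explicit connectedness argument for the dichotomy between $|B|^2\equiv 0$ and $|B|^2+\la_2\equiv n+1$, which the paper leaves implicit; this is a correct and worthwhile clarification but not a different method.
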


According to (\ref{Ric}), we can rewrite the above theorem as an intrinsic rigidity conclusion:

\begin{cor}
Let $M$ be an $n$-dimensional compact minimal Legendrian submanifold in $S^{2n+1}$, $R$ be the scalar curvature of $M$
and $\mu_1\leq \mu_2\leq \cdots\leq \mu_n$ be eigenvalue of the Ricci curvature tensor. If $n^2-n-2\leq R+\mu_2\leq n^2-1$, then $M$ is either a totally geodesic
subsphere (here $R+\mu_2\equiv n^2-1$) or a Calabi torus (here $R+\mu_2\equiv n^2-n-2$).
\end{cor}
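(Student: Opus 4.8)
The plan is to deduce the corollary directly from Theorem \ref{Calabi} by translating the hypothesis $n^2-n-2\le R+\mu_2\le n^2-1$ into the pinching condition $0\le |B|^2+\la_2\le n+1$ via the Gauss equation, and then reading off the conclusion. The two key facts I would invoke are already established in the excerpt: first, the scalar curvature satisfies $R=n(n-1)-|B|^2$ pointwise (obtained by taking the trace of (\ref{Ric}) twice); second, formula (\ref{Ric}) says that $\la$ is an eigenvalue of the fundamental matrix $(S_{ij})$ if and only if $n-1-\la$ is an eigenvalue of the Ricci tensor. Since $\la_1\ge\la_2\ge\cdots\ge\la_n$ are ordered from large to small, the map $\la\mapsto n-1-\la$ reverses the order, so the eigenvalues $\mu_1\le\mu_2\le\cdots\le\mu_n$ of the Ricci tensor are exactly $\mu_i=n-1-\la_i$; in particular $\mu_2=n-1-\la_2$.

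With these identifications the arithmetic is immediate: $R+\mu_2=\big(n(n-1)-|B|^2\big)+\big(n-1-\la_2\big)=n^2-1-\big(|B|^2+\la_2\big)$. Hence $|B|^2+\la_2=n^2-1-(R+\mu_2)$, and the hypothesis $n^2-n-2\le R+\mu_2\le n^2-1$ is equivalent to $0\le |B|^2+\la_2\le n+1$, which is precisely the pinching condition of Theorem \ref{Calabi}. (The lower bound $|B|^2+\la_2\ge 0$ is automatic since $(S_{ij})$ is nonnegative definite, so the real content of the hypothesis is the upper bound $R+\mu_2\ge n^2-n-2$.)

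Applying Theorem \ref{Calabi}, I conclude that $M$ is either a totally geodesic subsphere, in which case $|B|^2\equiv 0$ and $\la_2\equiv 0$, so $R+\mu_2\equiv n^2-1$; or a Calabi torus, in which case $|B|^2+\la_2\equiv n+1$, so $R+\mu_2\equiv n^2-n-2$. This is exactly the statement of the corollary. I would also remark that the converse identifications hold, so the pinching condition of the Main Theorem in terms of $R$ and $\mu_2$ stated in the introduction is genuinely equivalent to the one in terms of $|B|^2$ and $\la_2$.

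There is essentially no obstacle here: the proof is a one-line substitution once the correspondence between the spectra of $(S_{ij})$ and of the Ricci tensor is spelled out. The only point requiring a moment's care is the bookkeeping of the ordering convention — that the \emph{second largest} eigenvalue $\la_2$ of the fundamental matrix corresponds to the \emph{second smallest} eigenvalue $\mu_2$ of the Ricci tensor — but this follows at once from the order-reversing affine map $\la\mapsto n-1-\la$ and needs no further argument.
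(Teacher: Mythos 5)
Your proposal is correct and is exactly the paper's argument: the paper derives this corollary from Theorem \ref{Calabi} by the same substitution $\mu_i=n-1-\la_i$ and $R=n(n-1)-|B|^2$ coming from (\ref{Ric}), giving $R+\mu_2=n^2-1-(|B|^2+\la_2)$. Your bookkeeping of the order reversal and of the automatic bound $|B|^2+\la_2\ge 0$ is accurate, so nothing is missing.
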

	
	\bigskip\bigskip
	\Section{Compact minimal Legendrian submanifolds with $\la_3\equiv 0$}{Compact minimal Legendrian submanifolds with $\la_3\equiv 0$}

As showing in \S \ref{pinching}, For an $n$-dimensional compact minimal Legendrian submanifold $M\subset S^{2n+1}$,
if $|B|^2+\la_2\leq n+1$ and $\la_n\equiv 0$, then $M$ has to be totally geodesic. It is natural to ask whether we can
find a larger number $C(k)$ with $k\leq n-1$, such that the pinching condition $|B|^2+\la_2\leq C(k)$ and $\la_{k+1}\equiv 0$
implies $|B|^2+\la_2\equiv 0$ or $|B|^2+\la_2\equiv C(k)$. In this section, we consider the simplest case of $k=2$.
(Since $\la_2\equiv 0$, $H\equiv 0$ and the symmetry of $\si$ immediately force $\si\equiv 0$.)

\subsection{The rank of Gauss maps}

Let $M^n\subset S^{2n+1}$ be a Legendrian submanifold and $F:M\ra \C^{n+1}$ be the position vector. Then
$\g: M\ra \grs{n+1}{n+1}$
\begin{equation}
\g(p)=N_p M=JT_p M\oplus \R JF(p)
\end{equation}
is the {\it Gauss map} of $M$ via parallel translation in $\C^{n+1}$, where $\grs{n+1}{n+1}$ is the {\it Grassmannian manifold}
consisting of all oriented $(n+1)$-dimensional subspace of $\R^{2n+2}=\C^{n+1}$. Using Pl\"{u}cker coordinates,
the Gauss map can be written as
\begin{equation}
\g=JE_1\w \cdots \w JE_n\w JF,
\end{equation}
where $\{E_1,\cdots,E_n\}$ is an orthonormal frame field on $M$. Thus
\begin{equation}\aligned
\g_* X=&\sum_l JE_1\w\cdots\w JE_{l-1}\w \p_X(JE_l)\w JE_{l+1}\w\cdots\w JE_n\w JF+JE_1\w\cdots\w JE_n\w \p_X(JF)\\
=&- \sum_{l,p}JE_1\w\cdots\w JE_{l-1}\w \si(X,E_l,E_p)E_p\w JE_{l+1}\w\cdots\w JE_n\w JF.
\endaligned
\end{equation}
This means $\g_* X=0$ if and only if $\si(X,\cdot,\cdot)=0$ and we obtain the following conclusion:

\begin{pro}
For an $n$-dimensional Legendrian submanifold $M$ in $S^{2n+1}$, let $(S_{ij})$ be the fundamental matrix at $p\in M$,
whose eigenvalues are $\la_1\geq \cdots \geq \la_k>0=\la_{k+1}=\cdots=\la_n$, then the rank of Gauss map $\g$ at $p$ equals $k$.
\end{pro}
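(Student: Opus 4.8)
The plan is to read off the kernel of $\g_*$ at $p$ directly from the formula for $\g_* X$ established just above the statement, and then to identify that kernel with the null space of the fundamental matrix. Throughout, write $\si_X$ for the symmetric $(n\times n)$-matrix $\big(\si(X,E_l,E_p)\big)_{l,p}$ (so that $\si_{E_l}=\si_l$ in the earlier notation); with this notation the displayed computation says $\g_* X=-\sum_{l,p}\si(X,E_l,E_p)\,JE_1\w\cdots\w E_p\w\cdots\w JE_n\w JF$, where in the $(l,p)$-summand the factor $JE_l$ of $\g(p)$ has been replaced by $E_p$.

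First I would fix an orthonormal basis of $\C^{n+1}=\R^{2n+2}$ adapted to the orthogonal splitting $\C^{n+1}=\R F\oplus T_pM\oplus N_pM$, namely $\{JE_1,\dots,JE_n,JF,E_1,\dots,E_n,F\}$; this is indeed orthonormal because $M$ is Legendrian, so $N_pM=JT_pM\oplus\R JF$ and each $E_p$ is orthogonal to every $JE_l$ and to $JF$. In Pl\"{u}cker coordinates the decomposable $(n+1)$-vectors obtained from $\g(p)=JE_1\w\cdots\w JE_n\w JF$ by replacing one factor $JE_l$ with some $E_p$ are, for distinct pairs $(l,p)$, supported on distinct $(n+1)$-element subsets of this basis, hence linearly independent in $\Lambda^{n+1}\R^{2n+2}$. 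It follows that $\g_* X=0$ if and only if $\si(X,E_l,E_p)=0$ for all $l,p$, i.e.
$$\ker(\g_*)_p=\{X\in T_pM:\ \si_X=0\}=\{X\in T_pM:\ \si(X,Y,Z)=0\ \ \forall\,Y,Z\in T_pM\}.$$

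Next I would use the identity $S(X,Y)=\sum_{l,p}\si(X,E_l,E_p)\si(Y,E_l,E_p)=\lan\si_X,\si_Y\ran$, so that the quadratic form attached to $(S_{ij})$ is $X\mapsto\|\si_X\|^2$. Since $(S_{ij})$ is positive semidefinite, its null space is exactly $\{X:S(X,X)=0\}=\{X:\si_X=0\}$, which by the previous step coincides with $\ker(\g_*)_p$. Under the hypothesis $\la_1\geq\cdots\geq\la_k>0=\la_{k+1}=\cdots=\la_n$ this null space has dimension $n-k$, so $\dim\ker(\g_*)_p=n-k$ and therefore $\operatorname{rank}(\g_*)_p=n-(n-k)=k$.

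No step is genuinely hard; the one point deserving care — and the only real obstacle — is the linear independence of the decomposable $(n+1)$-vectors in the expansion of $\g_* X$, which becomes transparent once the adapted orthonormal basis above is fixed. One can also bypass it by identifying $T_{\g(p)}\grs{n+1}{n+1}$ with $\operatorname{Hom}(N_pM,(N_pM)^\perp)$ and observing that $\g_* X$ is the homomorphism sending $JE_l\mapsto-\sum_p\si(X,E_l,E_p)E_p$ and $JF\mapsto 0$, whose vanishing is again equivalent to $\si_X=0$.
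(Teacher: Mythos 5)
Your proof is correct and follows essentially the same route as the paper: the paper establishes the proposition by exactly the Pl\"ucker-coordinate computation of $\g_*X$ you reproduce, concluding $\g_*X=0$ iff $\si(X,\cdot,\cdot)=0$ and hence that the rank equals the number of nonzero eigenvalues of $(S_{ij})$. Your added justification of the linear independence of the decomposable $(n+1)$-vectors (via the adapted orthonormal basis coming from the Legendrian condition) is a point the paper leaves implicit, and is a worthwhile clarification rather than a departure.
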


\subsection{A structure theorem for $3$-dimensional minimal Legendrian submanifolds}
	
Now we consider compact minimal Legendrian submanifolds with $\la_3\equiv 0$. As shown in \cite{D-K-S-T}, $M$ has to be totally geodesic
whenever $n\geq 4$. So we only consider the case of $n=3$.

Let
\begin{equation}
M^+:=\{p\in M:|B|^2(p)\neq 0\},
\end{equation}
then due to the analyticity, $M^+$ is either empty or an open and dense subset of $M$; The former case means $M$ is totally geodesic, so we just
consider the latter one in the following text.

Let $\g: M^+\ra \grs{4}{4}$
be the Gauss map, which has constant rank $2$. Hence the image of $\g$ is an immersed $2$-dimensional submanifold of $\grs{4}{4}$,
and each connected component of any level set of $\g$ has to be a curve, which is called a G-loop. Along an arbitrary G-loop $\xi$,
let $T$ be the unit tangent vector field, and
\begin{equation}
\mc{D}:=\text{span}\{X,T\},\qquad \mc{D}^\bot:=\{v\in TM: \lan v,T\ran=0\},
\end{equation}
then it follows that (see \cite{D-K-S-T}):
\begin{itemize}
\item $B(T,v)=0$ for every $v\in TM$;
\item $\xi$ is a geodesic of $S^7$, i.e. $X$ and $T$ span a fixed subspace $\mc{D}$ of $\R^8$;
\item $\mc{D}^\bot, J(\mc{D}), J(\mc{D}^\bot)$ are all parallel along $\xi$.
\end{itemize}

We call $p\sim q$ whenever $p$ and $q$ lies in the same G-loop and denote by
\begin{equation}
\Si:=M^+\backslash \sim=\{[p]:p\in M^+\}
\end{equation}
the loop space equipped with the quotient topology. Locally,
\begin{equation}
[\g]:[p]\in \Si\mapsto \g(p)
\end{equation}
is a one-to-one correspondence between
a sufficiently small open subset of $\Si$ and the corresponding open subset of the Gauss image of $M^+$. Thus, $\Si$ can be seen as
a Riemannian surface, so that $[\g]$ is holomorphic. In other word, the complex structure $J_0$ on $\Si$ satisfies
\begin{equation}\label{com_str0}
J_0(\pi_*(E_1))=\pi_*(E_2),
\end{equation}
where $\pi:p\in M\ra [p]\in \Si$ and $\{E_1,E_2\}$ is an orientable orthonormal basis of $\mc{D}^\bot$.

Let
\begin{equation}
W:=\f{\sqrt{2}}{2}(E_1-\sqrt{-1}E_2)
\end{equation}
be a $(1,0)$-vector in $\mc{D}^\bot\otimes \C$, then
\begin{equation}
\lan W,W\ran=\lan \ol{W},\ol{W}\ran=0,\quad \lan W,\ol{W}\ran=1,
\end{equation}
\begin{equation}\label{b1}
B(W,\ol{W})=\f{1}{2}(B(E_1,E_1)+B(E_2,E_2))=0
\end{equation}
and
\begin{equation}
|B(W,W)|^2=|B(E_1,E_1)-\sqrt{-1}B(E_1,E_2)|^2=\f{|B|^2}{2}>0.
\end{equation}

On the other hand, $W$ can be seen as a complex vector-valued function on $\Si$, since it is parallel along each
G-loop. Let $z$ be a local complex coordinate of $\Si$, we shall calculate $W_z$ and $W_{\bar{z}}$.

From (\ref{com_str0}) we know $\pi_*W=f \pf{z}$ with a complex function $f$. Since
$$\lan \partial_W W,F\ran=-\lan W,\partial_W F\ran=\lan W,W\ran=0$$
we have
\begin{equation}\lan W_z,X\ran=0.
\end{equation}
Differentiating both sides of $B(T,\ol{W})=0$ shows
$$\aligned
0&=\n_W B(T,\ol{W})=(\n_W B)(T,\ol{W})+B(\n_W T,\ol{W})\\
&=(\n_T B)(W,\ol{W})+\lan \n_W T,W\ran B(\ol{W},\ol{W})+\lan \n_W T,\ol{W}\ran B(W,\ol{W})\\
&=\lan \n_W T,W\ran B(\ol{W},\ol{W}).
\endaligned$$
This implies $\lan \p_W W,T\ran=\lan \n_W W,T\ran=-\lan \n_W T,W\ran=0$ and hence
\begin{equation}
\lan W_z,T\ran=0.
\end{equation}
Since
$$\aligned
\lan \p_W W,JF\ran&=-\lan W,\p_W(JF)\ran=-\lan W,J\p_W F\ran=-\lan W,JW\ran=0,\\
\lan \p_W W,JT\ran&=\lan B(W,W),JT\ran=\lan B(T,W),JW\ran=0,
\endaligned$$
and
$$\lan \p_W W,J\ol{W}\ran=\lan B(W,W),J\ol{W}\ran=\lan B(W,\ol{W}),JW\ran=0,$$
we have
\begin{equation}
\lan W_z,JX\ran=0,\quad \lan W_z,JT\ran=0,\quad \lan W_z,JW\ran=0.
\end{equation}
In conjunction with
\begin{equation}
\lan W_z,W\ran=\f{1}{2}\lan W,W\ran_z=0,
\end{equation}
we can write
\begin{equation}\label{wz1}
W_z=h W+\mu J\ol{W}
\end{equation}
with
\begin{equation}
h:=\lan W_z, \ol{W}\ran, \quad \mu:=\lan W_z,JW\ran.
\end{equation}
Note that replacing $W$ by $\hat{W}:=e^{i\th}W$ with suitable $\th$ makes sure $\hat{\mu}:=\lan \hat{W}_z,J\hat{W}\ran=e^{2i\th}\lan W_z,JW\ran\in \R^+$. Thereby, we can assume $\mu$ takes positive real values everywhere.

Combining with (\ref{b1}) and $\pi_* \ol{W}=\bar{f}\pf{\bar{z}}$ implies $W_{\bar{z}}$ is a vector field in $(\mc{D}\oplus \mc{D}^\bot)\otimes \C$. Moreover,
\begin{equation}
\lan W_{\bar{z}},W\ran=\f{1}{2}\lan W,W\ran_{\bar{z}}=0,
\end{equation}
and
\begin{equation}
\lan W_{\bar{z}},\ol{W}\ran=-\lan W,\ol{W}_{\bar{z}}\ran=-\bar{h}
\end{equation}
imply the existence of $U$ in $\mc{D}\otimes \C$, such that
\begin{equation}\label{wz2}
W_{\bar{z}}=-\bar{h}W+U.
\end{equation}

A straightforward calculation based on (\ref{wz1}) and (\ref{wz2}) shows
\begin{equation}
\aligned
W_{z\bar{z}}&=h_{\bar{z}}W+hW_{\bar{z}}+\mu_{\bar{z}}J\ol{W}+\mu J\ol{W}_{\bar{z}}\\
&=(h_{\bar{z}}-\mu^2-|h|^2)W+(\mu_{\bar{z}}+\mu\bar{h})J\ol{W}+h U
\endaligned
\end{equation}
and
\begin{equation}
\aligned
W_{\bar{z}z}&=-\bar{h}_z W-\bar{h}W_z+U_z\\
&=(-\bar{h}_z-|h|^2)W-\mu\bar{h}J\ol{W}+U_z.
\endaligned
\end{equation}
In conjunction with
\begin{equation}
\lan U_z, JW\ran=-\lan U,JW_z\ran=0,
\end{equation}
we have
\begin{equation}\label{u1}
\mu_{\bar{z}}+2\mu\bar{h}=0
\end{equation}
and
\begin{equation}\label{u3}
U_z=(h_{\bar{z}}+\bar{h}_z-\mu^2)W+hU,
\end{equation}
which imply
\begin{equation}
(\log\mu)_{\bar{z}}=-2\bar{h},
\end{equation}
\begin{equation}\label{u4}
\aligned
\la:&=\lan U,\ol{U}\ran=\lan W_{\bar{z}},\ol{U}\ran=-\lan W,\ol{U}_{\bar{z}}\ran\\
&=-\bar{h}_z-h_{\bar{z}}+\mu^2=(\log\mu)_{z\bar{z}}+\mu^2
\endaligned
\end{equation}
and
\begin{equation}\label{u2}
\lan U,U\ran_z=2\lan U_z,U\ran=2h\lan U,U\ran.
\end{equation}
Combining (\ref{u1}) and (\ref{u2}) gives
\begin{equation}\label{con1}
\left(\mu\lan \ol{U},\ol{U}\ran\right)_{\bar{z}}=-2\mu\bar{h}\lan \ol{U},\ol{U}\ran+2\mu \bar{h}\lan \ol{U},\ol{U}\ran=0.
\end{equation}
Note that
\begin{equation}
\Psi:=\mu \lan \ol{U},\ol{U}\ran dz^3=\lan W_z,JW\ran\lan \ol{W}_z,\ol{W}_z\ran dz^3
\end{equation}
is independent of the choice of $W$ and $z$, hence (\ref{con1}) means $\Psi$ is a holomorphic $3$-form globally defined on $\Si$.

If $\Psi$ does not vanish everywhere, then we can find an open subset of $\Si$ which admits a complex coordinate $w$, such that
$\Psi=dw^3=\lan W_w,JW\ran\lan \ol{W}_w,\ol{W}_w\ran dw^3$. Therefore, without loss of generality we can assume
\begin{equation}\label{u5}
\mu \lan \ol{U},\ol{U}\ran=1
\end{equation}
holds locally. By computing,
$$\aligned
\lan U_{\bar{z}},W\ran&=-\lan U,W_{\bar{z}}\ran=-\lan U,U\ran=-\mu^{-1},\\
\lan U_{\bar{z}},\ol{W}\ran&=\lan U,\ol{W}_{\bar{z}}\ran=0,\\
\lan U_{z\bar{z}},W\ran&=\lan U_z,W\ran_{\bar{z}}-\lan U_z,W_{\bar{z}}\ran=-h\mu^{-1},\\
\lan U_{\bar{z}z},W\ran&=\lan U_{\bar{z}},W\ran_z-\lan U_{\bar{z}},W_z\ran=-(\mu^{-1})_z-h\mu^{-1}.
\endaligned$$
This shows $\mu_z=0$, i.e. $\mu$ is constant, then (\ref{u1}), (\ref{u3}) and (\ref{u4}) implies
\begin{equation}
h\equiv 0,\qquad U_z=-\mu^2 W
\end{equation}
 and
\begin{equation}\label{u6}
\lan U,\ol{U}\ran=\mu^2.
\end{equation}
Differentiating both sides of (\ref{u5}) and (\ref{u6}), we can derive $\lan U_{\bar{z}},U\ran=\lan U_{\bar{z}},\ol{U}\ran=0$, hence
\begin{equation}
U_{\bar{z}}=-\mu^{-1}\ol{W}.
\end{equation}
Comparing $U_{z\bar{z}}=-\mu^2 W_{\bar{z}}=-\mu^2 U$ and $U_{\bar{z}z}=-\mu^{-1}\ol{W}_z=-\mu^{-1}\ol{U}$ gives
$\mu\equiv 0$, which causes a contradiction to $\mu>0$.

Therefore $\Psi\equiv 0$, i.e. $\lan U,U\ran\equiv 0$. Since
$$\aligned
\lan U_{\bar{z}},W\ran&=-\lan U,W_{\bar{z}}\ran=-\lan U,U\ran=0,\\
\lan U_{\bar{z}},\ol{W}\ran&=\lan U,\ol{W}_{\bar{z}}\ran=0,\\
\lan U_{\bar{z}},U\ran&=\f{1}{2}\lan U,U\ran_{\bar{z}}=0,\\
\lan U_{\bar{z}},\ol{U}\ran&=\lan U,\ol{U}\ran_{\bar{z}}-\lan U,\ol{U}_{\bar{z}}\ran=\la_{\bar{z}}-\la \bar{h},
\endaligned$$
we have
\begin{equation}
U_{\bar{z}}=\left((\log\la)_{\bar{z}}-\bar{h}\right)U.
\end{equation}
Comparing
\begin{equation*}
U_{z\bar{z}}=(-\la W+hU)_{\bar{z}}=(-\la_{\bar{z}}+\la \bar{h})W+\left(h_{\bar{z}}-\la+h(\log\la)_{\bar{z}}-|h|^2\right)U
\end{equation*}
with
\begin{equation*}
U_{\bar{z}z}=\left[\left((\log\la)_{\bar{z}}-\bar{h}\right)U\right]_z
=(-\la_{\bar{z}}+\la \bar{h})W+\left((\log \la)_{\bar{z}z}-\bar{h}_z+h(\log\la)_{\bar{z}}-|h|^2\right)U
\end{equation*}
yields
\begin{equation}\label{la1}\aligned
(\log \la)_{z\bar{z}}&=h_{\bar{z}}+\bar{h}_z-\la=-(\log\mu)_{z\bar{z}}-\la\\
&=\mu^2-2\la.
\endaligned
\end{equation}

Via Pl\"{u}cker coordinates, the Gauss map of $M$ can be written as
\begin{equation}
\g(p)=JX\w JT\w JE_1\w JE_2=-\sqrt{-1}(JX\w JT\w JW\w J\ol{W}).
\end{equation}
Thus
\begin{equation}\aligned
\g_*\pf{z}&=-\sqrt{-1}(JX\w JT\w JW_z\w J\ol{W}+JX\w JT\w JW\w J\ol{W}_z)\\
&=\sqrt{-1}\mu JX\w JT\w \ol{W}\w J\ol{W}
\endaligned
\end{equation}
and
\begin{equation}
\lan \g_* \pf{z},\g_* \pf{\bar{z}}\ran=\mu^2.
\end{equation}
This means
\begin{equation}
g:=2\mu^2|dz|^2
\end{equation}
is the induced metric on $\Si$, whose corresponding Gauss curvature is
\begin{equation}
K_g=-\f{(\log \mu^2)_{z\bar{z}}}{\mu^2}=\f{2(\mu^2-\la)}{\mu^2}=2-\f{2\la}{\mu^2}.
\end{equation}

Noting that $\pi_* \ol{W}=\bar{f}\pf{\bar{z}}$, we have
\begin{equation*}
\aligned
&\lan U,X\ran=\lan W_{\bar{z}},X\ran=\bar{f}^{-1}\lan \p_{\ol{W}}W,X\ran\\
=&-\bar{f}^{-1}\lan W,\p_{\ol{W}}X\ran=-\bar{f}^{-1}.
\endaligned
\end{equation*}
Combining with $\lan U,U\ran=0$, we can derive $U=-\bar{f}^{-1}(X+\pm\sqrt{-1}T)$, hence
\begin{equation}
\la=\lan U,\ol{U}\ran=2|f|^{-2}
\end{equation}
and
\begin{equation}
|B|^2=2|B(W,W)|^2=2|\lan B(W,W),JW\ran|^2=2|f\mu|^2=\f{4\mu^2}{\la}.
\end{equation}
In conjunction with (\ref{u4}) and (\ref{la1}), we arrive at
\begin{equation}\aligned
\De_g \log |B|^2&=\f{2}{\mu^2}(\log |B|^2)_{z\bar{z}}=\f{2}{\mu^2}\left(2(\log\mu)_{z\bar{z}}-(\log\la)_{z\bar{z}}\right)\\
&=\f{8\la}{\mu^2}-6=\f{32}{|B|^2}-6.
\endaligned
\end{equation}

In summary, we get a structure theorem as follows:
\begin{thm}\label{structure}
Let $M$ be a minimal Legendrian submanifold in $S^7$, such that $\la_3=0$ and $|B|^2\neq 0$ everywhere. Then the Gauss map
$\g$ is a submersion from $M$ onto a $2$-dimensional submanifold $\Si$ of $\grs{4}{4}$, the squared norm of the second fundamental form $|B|^2$
takes the same value in each fibre, and
\begin{equation}
\aligned
K_g&=2-\f{8}{|B|^2},\\
\De_g \log |B|^2&=\f{32}{|B|^2}-6.
\endaligned
\end{equation}
Here $g$ is the induced metric on $\Si$, $K_g$ and $\De_g$ are the Gauass curvature and the Laplacian operator on $(\Si,g)$, respectively.

On the other hand, if $(\Si,g)$ is a simply-connected $2$-dimensional Riemannian manifold, such that $K_g<2$ and
\begin{equation}
\De_g \log (2-K_g)=4K_g-2,
\end{equation}
then there exists an isometric immersion $\psi:\Si\ra \grs{4}{4}$ and a minimal Legendrian submanifold $M\subset S^7$,
such that:
\begin{itemize}
\item The rank of the Gauss map $\g:M\ra \grs{4}{4}$ is 2 everywhere (i.e. $\la_3=0$ and $|B|^2\neq 0$ everywhere on $M$);
\item The image manifold of the Gauss map is just $\Si$;
\item $|B|^2\equiv \f{8}{2-K_g}$ on each fibre of $\g$.
\end{itemize}

\end{thm}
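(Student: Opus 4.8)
The forward direction is the content of the computation preceding the theorem, so the plan there is merely to assemble the pieces. When $\la_3\equiv 0$ and $|B|^2\neq 0$ everywhere, minimality forces $\la_2>0$ at every point: otherwise the fundamental matrix has rank $1$ somewhere, $\si$ is supported on a single line, and $H\equiv 0$ together with the total symmetry of $\si$ kills $\si$, a contradiction. Hence the fundamental matrix has rank $2$, so by the proposition on the rank of the Gauss map $\g$ has constant rank $2$, its fibres are the G-loops, and $\pi\colon M\to\Si$ is a submersion onto the immersed loop space. The functions $\mu$, $\la$ and $|B|^2=4\mu^2/\la$ are constant along the G-loops, hence descend to $\Si$; the $\g$-induced metric is $g=2\mu^2|dz|^2$, so $[\g]$ is an isometric immersion $\psi\colon\Si\to\grs{4}{4}$; and the two displayed identities $K_g=2-8/|B|^2$ and $\De_g\log|B|^2=32/|B|^2-6$ are exactly those derived above from (\ref{u4}), (\ref{la1}) and $|B|^2=4\mu^2/\la$. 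Since $|B|^2=8/(2-K_g)$, the latter is the same as $\De_g\log(2-K_g)=4K_g-2$.

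For the converse I would run this computation in reverse and then integrate a moving frame. First, in a local complex coordinate $z$ of $(\Si,J_0)$ write $g=2\mu^2|dz|^2$ with $\mu>0$, and \emph{define}
\[
\la:=\tfrac12\,\mu^2(2-K_g)>0,\qquad h:=-\tfrac12(\log\mu)_z .
\]
Then $(\log\mu)_{z\bar z}=\la-\mu^2$ holds automatically (it is just the Gauss-curvature formula $K_g=-(\log\mu^2)_{z\bar z}/\mu^2$), $(\log\mu)_{\bar z}=-2\bar h$ is the definition of $h$, and a short computation shows that the hypothesis $\De_g\log(2-K_g)=4K_g-2$ is equivalent to $(\log\la)_{z\bar z}=\mu^2-2\la$. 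Thus all the analytic data used in the forward computation — precisely (\ref{u1}), (\ref{u4}) and (\ref{la1}) — is now available on $\Si$; what remains is to realize it by an actual immersion.

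To do that I would set up the complete moving-frame system. On (a neighbourhood in) $\Si\times\R$, with fibre parameter $t$, look for a $\C^8$-valued frame $(F,T,W,\ol W,JF,JT,JW,J\ol W)$ and a nowhere-zero complex function $f$ on $\Si$ subject to: the reality and orthonormality constraints and the Legendrian relations ($J^2=-\Id$, $J(\mc D^\bot)\perp\mc D^\bot$, etc.); the base equations (\ref{wz1}), (\ref{wz2}), (\ref{u3}) with $U=-\bar f^{-1}(F+\sqrt{-1}\,T)$ — which forces $\la=2|f|^{-2}$ and $|B|^2=4\mu^2/\la$ — together with their $J$-images; and the fibre equations stating that $F,T$ rotate in the fixed plane $\mc D$ ($\partial_t F=T$, $\partial_t T=-F$) while $W$, $JW$ and the planes $\mc D^\bot,J\mc D,J\mc D^\bot$ stay fixed along the fibre. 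These assemble into a first-order linear system $d\Phi=\Phi\,\omega$ for the $8\times 8$ frame $\Phi$, with $\omega$ an $\mathfrak{so}(8)$-valued (equivalently, by the $J$-symmetry, $\mathfrak u(4)$-valued) $1$-form whose coefficients are built from $\mu,\la,h,f$. I expect the integrability condition $d\omega+\omega\wedge\omega=0$ to collapse to exactly the three scalar identities already secured plus identities that hold automatically; since $\Si$ is simply connected the local solutions then patch to a global $\Phi$, and its column $F$ immerses $\Si\times\R$ into $S^7\subset\R^8$ as a submanifold $M$ that is Legendrian and minimal by construction (minimality because $B(T,\cdot)=0$ and $B(W,\ol W)=0$ give $H=\tr B=0$), whose Gauss map has rank $2$ (so $\la_3=0$), whose Gauss image is $\Si$ with $\psi$ the claimed isometry, and with $|B|^2\equiv 4\mu^2/\la=8/(2-K_g)$ on each fibre.

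The hard part will be the integrability verification in the last step: one must record the \emph{complete} set of structure equations — in particular the behaviour of the frame in the fibre direction and its compatibility with the base directions, which is where "the G-loops are geodesics" and "$\mc D^\bot,J\mc D,J\mc D^\bot$ are parallel along them" get used — and then confirm that $d\omega+\omega\wedge\omega=0$ is equivalent to the single scalar PDE rather than to a strictly larger, possibly overdetermined, system. A secondary point is topological: simple-connectedness of $\Si$ is exactly what makes the global frame exist, and since the statement asks only for \emph{a} minimal Legendrian submanifold (not a compact one), it is harmless to take the fibre to be $\R$, so the closing-up of the G-loops need not be addressed.
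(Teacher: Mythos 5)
Your forward half is correct and is essentially the paper's own argument: the ``proof'' of that half in the source is precisely the computation occupying the subsection before the theorem, culminating in $(\log\la)_{z\bar z}=\mu^2-2\la$, $K_g=2-2\la/\mu^2$ and $|B|^2=4\mu^2/\la$, and your reduction of the displayed identities to these facts checks out (as does your pointwise observation that minimality plus the total symmetry of $\si$ rules out rank $1$, so the fundamental matrix has rank exactly $2$ and $\g$ is a submersion). One point worth making explicit: the identity (\ref{la1}) you invoke is not a formal consequence of the frame equations alone; it rests on the vanishing of the holomorphic cubic differential $\Psi=\lan W_z,JW\ran\lan \ol{W}_z,\ol{W}_z\ran dz^3$, which the paper establishes by deriving a contradiction from $\Psi\neq 0$. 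That is the only genuinely non-computational step in the forward direction, and your write-up passes over it silently.

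The gap is in the converse. Your analytic reductions are correct --- defining $\la:=\f{1}{2}\mu^2(2-K_g)$ and $h:=-\f{1}{2}(\log\mu)_z$ does make $(\log\mu)_{z\bar z}=\la-\mu^2$ automatic and does convert the hypothesis $\De_g\log(2-K_g)=4K_g-2$ into $(\log\la)_{z\bar z}=\mu^2-2\la$ --- but everything after that is an announcement rather than a proof. The existence of the immersion is exactly the assertion that the first-order frame system is compatible, and you write that you ``expect'' $d\om+\om\w\om=0$ to collapse to the three scalar identities without verifying it. Concretely, the system carries more data than $\mu,\la,h$: the relation $U=-\bar f^{-1}(F\pm\sqrt{-1}\,T)$ pins down only $|f|=\sqrt{2/\la}$, and the phase of $f$ (equivalently, the rate at which the fixed plane $\mc{D}$ rotates as one moves in the base directions) obeys its own first-order equation whose solvability must be checked; likewise $\lan U,U\ran=0$, a derived fact in the forward direction, must here be imposed as part of the ansatz and shown to propagate. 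Until that computation is carried out, the converse --- which is half of the stated theorem --- is not established. To be fair, the paper supplies no proof of this half either, stating the theorem immediately after the forward computation, so there is nothing in the source to compare your plan against; but as written your proposal proves only the first half of the statement.
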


In conjunction with the results in \cite{C-D-V-V,L-T}, we establish the following rigidity theorem:
\begin{thm}
Let $M$ be an $n$-dimensional compact minimal Legendrian submanifold in $S^{2n+1}$ with $n\geq 3$, such that
$\la_3\equiv 0$, then
\begin{itemize}
\item If $n\geq 4$, then $M$ has to be totally geodesic.
\item If $n=3$ and $|B|^2\neq 0$ everywhere, then $M$ is diffeomorphic to $S^3$.
\item If $n=3$ and $|B|^2\leq \f{16}{3}$, then $M$ is either totally geodesic or $|B|^2\equiv \f{16}{3}$ (i.e. $|B|^2+\la_2\equiv 8$).
For the latter case, $M=S^3=\{(z,w)\in \C^2:|z|^2+|w|^2=1\}$ and $F:M\ra \C^4$ is given as
\begin{equation*}
F(z,w)=(z^3+3z\bar{w}^2,\sqrt{3}(z^2 w+w\bar{w}^2-2z\bar{z}\bar{w}),
\sqrt{3}(zw^2+z\bar{z}^2-2w\bar{z}\bar{w}),w^3+3w\bar{z}^2).
\end{equation*}
\item If $n=3$ and $|B|^2\geq \f{16}{3}$, then $|B|^2\equiv \f{16}{3}$.
\end{itemize}
\end{thm}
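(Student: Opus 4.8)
\emph{Proof strategy.} The plan is to reduce the whole statement to the surface picture of Theorem~\ref{structure}, supplemented by two external inputs: \cite{D-K-S-T} for the range $n\ge 4$, and the classifications of \cite{C-D-V-V,L-T} for the identification of the extremal example. The case $n\ge 4$ needs no computation: by \cite{D-K-S-T}, a minimal Legendrian submanifold of $S^{2n+1}$ with $\la_3\equiv 0$ (equivalently, with Gauss map of rank $\le 2$) is totally geodesic once $n\ge 4$. So from now on I fix $n=3$ and discard the trivial case where $M$ is totally geodesic; then by analyticity $M^+=\{p:|B|^2(p)\ne 0\}$ is open, dense and nonempty, and $M^+$ is itself a minimal Legendrian submanifold of $S^7$ with $\la_3\equiv0$ and $|B|^2\ne0$ everywhere, so Theorem~\ref{structure} presents its Gauss map as a circle bundle over a connected Riemann surface $\Si^+$ on which $|B|^2$ is a well-defined function; setting $v:=\log(2-K_g)=\log(8/|B|^2)$, the structure equations read $K_g=2-e^{v}<2$ and $\De_g v=4K_g-2=6-4e^{v}$ on $\Si^+$. (I would also record that, for $n=3$ with $\la_3\equiv0$ and $H\equiv0$, one automatically has $\la_1\equiv\la_2=\tfrac12|B|^2$: the restriction of $\si$ to the rank-$2$ distribution is a trace-free symmetric $3$-tensor, and for such a tensor on $\R^2$ the induced bilinear form $S$ is a multiple of the identity by a one-line computation --- or because these tensors form an irreducible $SO(2)$-module. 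Hence $|B|^2+\la_2=\tfrac32|B|^2$, so $|B|^2\equiv\tfrac{16}{3}$ is the same as $|B|^2+\la_2\equiv 8$.)

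For the two pinching statements I would run a maximum principle on $\Si^+$. If $|B|^2\le\tfrac{16}{3}$, then $e^{v}=8/|B|^2\ge\tfrac32$, so $w:=v-\log\tfrac32\ge 0$ and $\De_g w=6-6e^{w}\le 0$: $w$ is superharmonic. Since $|B|^2\to 0$ as one approaches $M\setminus M^+$, $v\to+\infty$ towards the ends of $\Si^+$, so $w$ attains its infimum at an interior point; the strong maximum principle forces $w$ to be constant on the connected $\Si^+$, whence $w\equiv 0$ by the equation. Thus $|B|^2\equiv\tfrac{16}{3}$ on $M^+$ and, by density and continuity, on all of $M$ --- in particular $M^+=M$ a posteriori. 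If instead $|B|^2\ge\tfrac{16}{3}$, then $M^+=M$ automatically and $\Si:=\Si^+$ is compact; now $e^{v}=8/|B|^2\le\tfrac32$, so $w:=\log\tfrac32-v\ge0$ satisfies $\De_g w=6e^{-w}-6\le 0$, a superharmonic function on a closed surface, hence constant, hence $\equiv 0$. In both cases $K_g\equiv\tfrac12$ and $|B|^2\equiv\tfrac{16}{3}$.

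For the bullet ``$|B|^2\ne0$ everywhere'' I would invoke Gauss--Bonnet: here $M^+=M$, $\Si$ is a compact (connected, orientable) Riemann surface, and integrating the structure equation over $\Si$ gives $\int_\Si(4K_g-2)\,dA_g=\int_\Si\De_g v\,dA_g=0$; combined with $\int_\Si K_g\,dA_g=2\pi\chi(\Si)$ this yields $8\pi\chi(\Si)=2\,\mathrm{Area}(\Si,g)>0$, so $\chi(\Si)>0$ and $\Si\cong S^2$ (with $\mathrm{Area}(\Si,g)=8\pi$). To pin down the diffeomorphism type and the explicit embedding I would pass through the Hopf fibration $\pi\colon S^7\to\Bbb{CP}^3(4)$: since $M$ is Legendrian, $\pi|_M$ is an immersion, and compactness makes $\pi|_M\colon M\to\pi(M)$ a covering onto a compact minimal Lagrangian submanifold of $\Bbb{CP}^3(4)$ whose second fundamental form has rank $\le 2$. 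By the classification of \cite{C-D-V-V}, $\pi(M)$ is the equivariant minimal Lagrangian $3$-sphere of \cite{L-T}; being simply connected, it forces $\pi|_M$ to be a diffeomorphism, so $M\cong S^3$, and when $|B|^2\equiv\tfrac{16}{3}$ the explicit horizontal lift described in \cite{L-T} is precisely the map $F$ displayed in the statement.

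The genuinely load-bearing steps are the two imported results: the reduction ``$n\ge4\Rightarrow$ totally geodesic'' is entirely \cite{D-K-S-T}, and the identification of the rank-$2$ extremal submanifold with the example of \cite{L-T} --- hence the diffeomorphism type and the closed form of $F$ --- rests on the classification of \cite{C-D-V-V}; matching the structure equations $K_g=2-8/|B|^2$, $\De_g\log(2-K_g)=4K_g-2$ to that classification is the one place the argument is not self-contained. Internally, the only mild subtlety is the possibly nonempty zero locus of $|B|^2$, but this dissolves, since the maximum principle already gives $|B|^2\equiv\tfrac{16}{3}$ on the dense set $M^+$ and therefore everywhere.
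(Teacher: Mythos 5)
Your proposal is correct and follows essentially the same route as the paper, which states this theorem as a direct consequence of Theorem \ref{structure} together with \cite{D-K-S-T} (for $n\ge 4$) and the classifications in \cite{C-D-V-V,L-T}; your maximum-principle argument on $\log(8/|B|^2)$ and the Gauss--Bonnet computation are exactly the details the paper leaves implicit, and they check out. The only point to scrutinize is the second bullet: Gauss--Bonnet only gives $\Si\cong S^2$, and upgrading this to $M\cong S^3$ (rather than a lens space or $S^2\times S^1$ as total space of the circle fibration) rests entirely on the imported classification of \cite{C-D-V-V}, which is likewise all the paper itself offers at that step.
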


	\bigskip\bigskip

	\bibliographystyle{amsplain}

\begin{thebibliography}{10}
\bibitem{B-O} D. E. Blair and K. Ogiue: Geometry of integral submanifolds of a contact distribution.
Illinois J. Math. {\bf 19}(1975), 269-276.

\bibitem{B-O2}D. E. Blair and K. Ogiue: Positively curved integral submanifolds of a contact distribution.
Illinois J. Math. {\bf 19}(1975), 628-631.

\bibitem{C-L-U} I. Castro, H. Z. Li and F. Urbano: Hamiltonian-minimal Lagrangian submanifolds
in complex space forms. Pacific J. Math. {\bf 227}(2006), 43-63.

\bibitem{Ch} S. P. Chang: On minimal hypersurfaces with constant scalar curvatures
in $S^4$. J. Diff. Geom. {\bf 37}(1993), 523-534.	

\bibitem{C-O} B. Y. Chen and K. Ogiue: On totally real submanifolds. Tran. Amer. Math. Soc.
{\bf 193}(1974), 257-266.

\bibitem{C-X} Q. Chen and S. L. Xu: Rigidity of compact minimal submanifolds in a unit sphere.
Geom. Dedi. {\bf 45}(1993), 83-88.	

\bibitem{C-D-V-V} B. Y. Chen, F. Dillen, L. Verstraelen and L. Vrancken:
An exotic totally real minimal immersion of $S^3$ in ${CP}^3$ and its
characterisation. Proc. Roy. Soc. Edinburgh Sect. A {\bf 126}(1996), 153-165.

\bibitem{C} S. S. Chern: Minimal submanifolds in a Riemannian manifold. University of Kansas,
Department of Mathematics Technical Report 19, Univ. of Kansas, Lawrence, Kan., 1968.

\bibitem{C-D-K} S. S. Chern, M. Do Carmo and S. Kobayashi: Minimal
submanifolds of a sphere with second fundamental form of constant length.
{\it Functional Analysis and Related Fields}, Springer-Verlag, Berlin, 1970, 59-75.	

\bibitem{D-K-S-T} M. Dajczer, Th. Kasioumis, A. Savas-Halilaj and Th. Vlachos:
Complete minimal submanifolds with nullity in Euclidean spheres. Comment Math. Helv. {\bf 93}(2018), 645-660.

\bibitem{D-V} F. Dillen and L. Vrancken: C-totally real submanifolds of Sasakian space forms.
J. Math. Pures Appl. {\bf 69}(1990), 85-93.

\bibitem{D-X} Q. Ding and Y. L. Xin: On Chern's problem for rigidity of minimal hypersurfaces in the spheres.
Adv. Math. {\bf 227}(2011), 131-145.

\bibitem{G} H. Gauchman: Pinching theorems for totally real minimal submanifolds of $CP^n(c)$.
Tohoku J. Math. {\bf 41}(1989), 249-257.

\bibitem{L} B. Lawson: Local rigidity theorems for minimal hypersurfaces. Ann. Math. {\bf 89}
(1969), 187-197.

\bibitem{L-X-X} L. Lei, H. W. Xu and Z. Y. Xu: On the generalized Chern conjecture for hypersurfaces with constant
mean curvature in a sphere. Sci. China. Math. {\bf 64}(2021), 1493-1504.

\bibitem{L-L} A. M. Li and J. M. Li: An intrinsic rigidity theorem for minimal submanifolds in a sphere.
Arch. Math. {\bf 58}(1992), 582-594.

\bibitem{L-W} H. Z. Li and X. F. Wang: Calabi product Lagrangian immersions in complex projective space
and complex hyperbolic space. Results in Math. {\bf 59}(2011), 453-470.

\bibitem{L-T} Z. Q. Li and Y. Q. Tao: Equivariant Lagrangian minimal $S^3$ in ${CP}^3$. Acta Math. Sin. (Eng. Ser.) {\bf 22}(2006), 1215-1220.

\bibitem{Lu} Z. Q. Lu: Normal scalar curvature conjecture and its applications. J. Func. Anal. {\bf 261}(2011),
1284-1308.

\bibitem{L-S} Y. Luo and L. L. Sun: Rigidity of closed CSL submanifolds in the unit sphere. Ann. Inst. H. Poincar\'{e} C Anal. Non Lin\'{e}aire
{\bf 40}(2023), 531-555.

\bibitem{L-S-Y} Y. Luo, L. L. Sun and J. B. Yin: An optimal pinching theorem of minimal Legendrian submanifolds
in the unit sphere. Calc. Var. PDE {\bf 61}(2022), 1-18.

\bibitem{M-R-U} S. Montiel, A. Ros and F. Urbano: Curvature pinching and eigenvalue rigidity for minimal submanifolds.
Math. Z. {\bf 191}(1986), 537-548.

\bibitem{N} H. Naitoh: Isotropic submanifolds with parallel second fundamental form in $P^m(c)$. Osaka J. Math.
{\bf 18}(1981), 427-464.

\bibitem{N-T} H. Naitoh and M. Takeuchi: Totally real submanifolds and symmetric bounded domains.
Osaka J. Math. {\bf 19}(1982), 717-731.

\bibitem{O} K. Ogiue: Positively curved totally real minimal submanifolds immersed in a complex projective space.
Proc. Amer. Math. Soc. {\bf 56}(1976), 264-266.

\bibitem{P-T} C. K. Peng and C. L. Terng: Minimal hypersurfaces of sphere with constant scalar curvature.
Ann. Math. Stud. {\bf 103}, Princeton Univ. Press, Princeton, NJ, 1983, 177-198.

\bibitem{P-T2} C. K. Peng and C. L. Terng: The Scalar curvature of minimal hypersurfaces in spheres.
Math. Ann. {\bf 266}(1983), 105-113.

\bibitem{S} J. Simons: Minimal varieties in riemannian manifolds.
Ann. Math. {\bf 88}(1968), 62-105.

\bibitem{S-Y} Y. J. Suh and H. Y. Yang: The scalar curvature of minimal hypersurfaces in a unit sphere.
Comm. Cont. Math. {\bf 9}(2007), 183-200.

\bibitem{U} F. Urbano: Totally real minimal submanifolds of a complex projective space.
Proc. Amer. Math. Soc. {\bf 93}(1985), 332-334.

\bibitem{W-X} S. M. Wei and H. W. Xu: Scalar curvature of minimal hypersurfaces in a sphere. Math. Res. Lett.
{\bf 14}(2007), 423-432.

\bibitem{W} P. Y. Wu: First eigenvalue characterization of Clifford hypersurfaces and Veronese surfaces. arXiv: 2403.01138.

\bibitem{Xia} C. Xia: Minimal submanifolds with bounded second fundamental form. Math. Z. {\bf 208}(1991), 537-543.

\bibitem{X} Y. L. Xin: Minimal submanifolds and related topics. Nankai Tracts in Math. Vol. 8, World Sci. Pub.
Co., Inc., River Edge, NJ, 2003.

\bibitem{X-X} H. W. Xu and Z. Y. Xu: On Chern's conjecture for minimal hypersurfaces and rigidity of self-shrinkers.
J. Func. Anal. {\bf 273}(2017), 3406-3425.

\bibitem{Y-K-M} S. Yamaguchi, M. Kon and Y. Miyahara: A Theorem on C-totally real minimal surface.
Proc. Amer. Math. Soc. {\bf 54}(1976), 276-280.



\bibitem{Y-C1} H. C. Yang and Q. M. Cheng: A note on the pinching constant of minimal hypersurfaces
with constant scalar curvature in the unit sphere. Kexue Tongbao, {\bf 35}(1990), 167-170; Chin. Sci. Bull.
{\bf 36}(1991), 1-6.

\bibitem{Y-C2} H. C. Yang and Q. M. Cheng: An estimate of the pinching constant of minimal hypersurfaces
with constant scalar curvature in the unit sphere. Manu. Math. {\bf 84}(1994), 89-100.

\bibitem{Y-C3} H. C. Yang and Q. M. Cheng: Chern's conjecture on minimal hypersurfaces. Math. Z. {\bf 227}
(1998), 377-390.

\bibitem{Y} S. T. Yau: Problem section, In: Seminar on Differential Geometry, Ann. Math. Stud. {\bf 102},
Princeton Univ. Press, Princeton, NJ, 1982, 669-706.

\bibitem{Y2} S. T. Yau: Submanifolds with constant mean curvature I. Amer. J. Math. {\bf 96}(1974), 346-366.

\bibitem{Y3} S. T. Yau: Submanifolds with constant mean curvature II. Amer. J. Math. {\bf 97}(1975), 76-100.

\bibitem{Z} Q. Zhang: The pinching constant of minimal hypersurfaces in the unit spheres. Proc. Amer. Math. Soc.
{\bf 138}(2010), 1833-1841.
		
		
	\end{thebibliography}

\end{document}